\DeclareMathOperator{\sech}{sech}
\newtheorem{thm}{Theorem}[section]
\newtheorem{lm}[thm]{Lemma}
\numberwithin{equation}{section}
\renewcommand\Re{\operatorname{Re}}
\renewcommand\Im{\operatorname{Im}}
\def\xx{{\mathrm x}}
\def\R{\mathbb{R}}
\def\T{\mathcal{T}}
\def\E{\mathcal{E}}
\def\E{\mathcal{E}}
\def\E{{\cal E}}
\begin{document}

\title{Analysis of a Crank-Nicolson
finite difference scheme for $(2+1)$D perturbed nonlinear Schr\"odinger equations with saturable nonlinearity}

\author{Anh-Ha Le$^{1,4}$}

\author{Toan T. Huynh$^{2,4}$}

\author{Quan M. Nguyen$^{3,4}$}
%\email[Corresponding author. E-mail address: ]{quannm@hcmiu.edu.vn}

\affiliation{$^{1}$Faculty of Mathematics and Computer Science, University of Science, Ho Chi Minh City, Vietnam}

\affiliation{$^{2}$Department of Mathematics, University of Medicine and Pharmacy at Ho Chi Minh City, Ho Chi Minh City, Vietnam}

\affiliation{$^{3}$Department of Mathematics, International University, Ho Chi Minh City, Vietnam}

\affiliation{$^{4}$Vietnam National University, Ho Chi Minh City, Vietnam}

\date{\today}

\begin{abstract}
We analyze a Crank–Nicolson finite difference discretization for the perturbed (2+1)D nonlinear Schr\"odinger equation with saturable nonlinearity and a perturbation of cubic loss. We show the boundedness, the existence and uniqueness of a numerical solution. We establish the error bound to prove the convergence of the numerical solution. Moreover, we find that the convergence rate is at the second order in both time step and spatial mesh size under a mild assumption. The numerical scheme is validated by the extensive simulations of the (2+1)D saturable nonlinear Schr\"odinger model with cubic loss. The simulations for travelling solitons are implemented by using an accelerated imaginary-time evolution scheme and the Crank–Nicolson finite difference method. 
\end{abstract}

%\keywords{Crank-Nicolson finite difference, (2+1)D Nonlinear Schr\"odinger equation, Saturable nonlinearity, 2D soliton, Perturbation}

\maketitle
%\newpage

%\maketitle

\section{Introduction}\label{Intro}
The nonlinear Schr\"odinger (NLS) equations, which is a nonlinear dispersive wave equation, characterize several physical systems, such as those in optics, plasma physics, and water waves \cite{Ablowitz2011,Agrawal2003}. In one-dimensional (1D) guided-wave optics, a typical cubic NLSE application is the dynamics of optical solitons due to the tight balance of dispersion and a pure Kerr nonlinearity  \cite{Ablowitz2011,Agrawal2003}. Recently, two-dimensional (2D) solitons have attracted great attention due to their wide range applications in nonlinear 2D optical waveguide arrays and lattices \cite{Nature2003}, nonlinear photonic crystal structures \cite{Ablowitz2011, Nature2003},  Bose-Einstein condensates (BEC) \cite{Agrawal2003},
and plasma physics \cite{Horton1996}. Unlike 1D solitons, the 2D solitons can not be stabilized in {\it pure} Kerr nonlinear media \cite{Agrawal2003, Malomed2002, Malomed2011}. However, 2D solitons can exist and remain stable in a pure Kerr nonlinear optical media when subjected to an {\it external potential}. Additionally, 2D solitons can exist and also be stabilized in a modified nonlinearity medium such as in a layered structure with sign-alternating Kerr nonlinearity \cite{Malomed2002}, in a competing nonlinear medium, in a saturable nonlinear medium \cite{Weilnau2022, Malomed_2004, Torner_2010}. Lately, there has been a renewed interest in the existence and stability of 2D optical solitons in saturable nonlinear media, in order to achieve stable transmission of light beams at high speeds in 2D materials. The travelling of a single 2D soliton in saturable nonlinear media can be described by the (2+1)D nonlinear Schr\"odinger equation with saturable nonlinearity \cite{Weilnau2022, Torner_2010}. Additionally, the existence of solutions for a class of coupled saturable (2+1)D nonlinear Schr\"odinger equations has been uncovered in Ref. \cite{Maia2013} and its soliton amplitude dynamics under the effect of weak nonlinear loss has been studied in Ref. \cite{NH2021}.

Due to the significance of solitons' applications in 1D and 2D optical waveguides, several numerical schemes for NLS-type equations have been investigated, including the Crank-Nicholson scheme, the Ablowitz-Ladik scheme, the pseudo-spectral split-step method, the Hamiltonian preserving method, etc \cite{Taha1984, Yang2010}. For 1D soliton, a common approach is to use the split-step Fourier method thanks to the Fast Fourier Transform \cite{Weideman1986, Weideman1997, Taha2003, Yang2008, Bao2003}. Thereafter, there have been some finite difference schemes for the (2+1)D NLS models with pure Kerr nonlinearity and with a potential. The {\it unperturbed} (2+1)D NLS and its specific version, the Gross-Pitaevskii equation with a potential in higher dimensional spaces describing 2D and 3D solitons in Bose-Einstein condensates, has been studied in Refs. \cite{Bao2013a, Bao2013b}. In \cite{Bao2013b}, the authors analyzed finite difference methods for the unperturbed Gross-Pitaevskii equation with an angular momentum rotation term in two and three dimensions and obtained the optimal convergence rate. The methods were based on the conservative Crank-Nicolson
finite difference (CNFD) method and semi-implicit finite difference (SIFD)
method. However, as aforementioned, the 2D soliton can be stabilized in a modified nonlinearity medium and they can be {\it perturbed} by some nonlinear processes such as cubic loss due to two-photon absorption. Moreover, in spite of the important use of the saturable nonlinearity to stabilize 2D soliton, which is arising recently in 2D materials, the study on numerical schemes of (2+1)D NLS equation with saturable nonlinearity (without an external potential) {\it and with} a perturbation is still lacking. For the later perturbed (2+1)D NLS model, which includes the interplay between the saturable nonlinearity and the perturbation in a two-dimensional spatial space, the condition for stability and convergence can be changed. Therefore, it requires further analysis and a technical treatment for uniqueness and convergence of the numerical solution by the finite difference scheme.

In this paper, we establish and analyze a CNFD scheme for the (2+1)D nonlinear Schr\"odinger equation with saturable nonlinearity and a dissipation in terms of the cubic loss \cite{Weilnau2022, Torner_2010}:
\begin{align}
\left\{
\begin{array}{cc}\label{eq:NLS}
	&i\dfrac{\partial u}{\partial t}  + \Delta u + \lambda \dfrac{u|u|^2}{1+|u|^2} + i\varepsilon u |u|^2 = 0,~~\forall (x,y) \in \Omega= (a,b)\times(c,d), ~t\in(0,T),\\
	&u(x,y,0) = u_0(x,y),~~\forall (x,y) \in \Omega,\\
	&u(x,y,t) = 0,~~~~\forall (x,y) \in \Gamma = \partial \Omega, ~ t\in [0,T],
\end{array}
\right.
\end{align}
where $u(x,y,t)$ stands for the 2D spatial soliton solution with $x$, $y$, and $t$ being the spatial variables and temporal variable respectively. In equation \eqref{eq:NLS}, $\lambda$ is a real constant, $\Delta u = \dfrac{\partial^2 u}{\partial x^2} + \dfrac{\partial^2 u}{\partial y^2}$, and $\varepsilon \geq 0$ is the cubic loss coefficient. Note that in the context of nonlinear optics, the evolution variable is the propagation distance $z$ instead of the time $t$ in Eq. \eqref{eq:NLS}. The cubic loss arises in optical materials due to two-photon absorption (TPA) or gain/loss saturation in silicon media \cite{Boyd-2008}. TPA has been received considerable attention in recent years due to its importance in silicon nanowaveguides, which are expected to play a crucial role in optical processing applications in optoelectronic devices, including pulse switching and compression,
wavelength conversion, regeneration, etc. \cite{Husko2014, Loon2018, Peleg2019}. First, we establish the CNFD scheme for the perturbed (2+1)D  nonlinear Schr\"odinger equation with saturable nonlinearity. Second, we analyze the boundedness of discrete solutions and show the existence and the uniqueness of a discrete solution by the CNFD scheme. Third, we show that the convergence rate is of second order in both time step and spatial mesh sizes. Finally, we validate the numerical scheme by some numerical experiments for the propagation of 2D solitons in saturable nonlinear media.

The rest of the paper is organized as follows. In section \ref{Finite}, we present the discrete CNFD scheme for the evolution equation. In section \ref{Error}, we present the main error estimate results for the numerical scheme, including the boundedness of discrete solutions, the existence and uniqueness solution, and the analysis of convergence rate. Section \ref{Num} is reserved for the results of numerical experiments and section \ref{Con} is for the conclusion. In Appendix \ref{Appendix}, we show the consistency of the CNFD scheme.

\section{Finite difference method and main results}
\subsection{Crank–Nicolson finite difference method}
\label{Finite}
Let us consider a uniform partition with $(J+1)\times(K+1)$  points with  $(x_{j},y_{k})$ on $\Omega$, for all $ j=0,1,2,\cdots, J$ and $k=0,1,\cdots, K$. Choose the mesh sizes $\Delta x = \frac{b-a}{J}$ and $\Delta y =\frac{d-c}{K}$ and denote $h := h_{max}= \max\{\Delta x,\Delta y\}$ and $h_{min} = \min\{\Delta x, \Delta y\}$. 
One can divide the interval $[0, T]$ into $N$ sub-intervals of the constant length $\tau=\frac{T}{N}$. Then
\begin{align*}
 x_j=j\Delta x,~y_k = k\Delta y, \text{ and }t_n=n\tau, ~ \forall j = \overline{0,J};~k = \overline{0,K}; \text{ and } n = \overline{0,N}.
\end{align*}
Let $U_{j,k}^n \approx u(x_j,y_k,t_n)$ represent the numerical approximation at the grid point $(x_j,y_k,t_n)$. We define the discrete finite difference operators:
\begin{align*}
&\delta^+_t u^n_{j,k} = \frac{u^{n+1}_{j,k}-u^n_{j,k}}{\tau}, \delta_x u_{j,k}^n= \frac{u^n_{j+1,k}-u^n_{j-1,k}}{2\Delta x}, \delta_x^+ u_{j,k}^n= \frac{u^n_{j+1,k}-u^n_{j,k}}{\Delta x}, \delta_x^- u_{j,k}^n= \frac{u^n_{j,k}-u^n_{j-1,k}}{\Delta x},\\
&\delta_t u^n_{j,k} = \frac{u^{n+1}_{j,k}-u^{n-1}_{j,k}}{2\tau}, \delta_y u_{j,k}^n= \frac{u^n_{j,k+1}-u^n_{j,k-1}}{2\Delta y}, \delta_y^+ u_{j,k}^n= \frac{u^n_{j,k+1}-u^n_{j,k}}{\Delta y},  \delta_y^- u_{j,k}^n= \frac{u^n_{j,k}-u^n_{j,k-1}}{\Delta y},\\
&\delta^2_{x} u_{j,k}^n  = \frac{u_{j+1,k}^n-2u_{j,k}^n+u_{j-1,k}^n}{(\Delta x)^2}, ~~\delta^2_{y} u_{j,k}^n  = \frac{u_{j,k+1}^n-2u_{j,k}^n+u_{j,k-1}^n}{(\Delta y)^2},\\
&\delta u^n_{j,k} = (\delta_x u_{j,k}^n,\delta_y u_{j,k}^n),~~\delta^+ u^n_{j,k} = (\delta_x^+ u_{j,k}^n,\delta_y^+ u_{j,k}^n),~~\delta^2 u^n_{j,k} =  \delta^2_x u^n_{j,k} + \delta^2_y u^n_{j,k}.
\end{align*}
Let
\begin{align*}
&\T_{JK} = \big\{(j,k)| j =1,2,\cdots,J-1 \text { and } k=1,2,\cdots,K-1\big\},\\
&\T^0_{JK} = \big\{(j,k)| j =0,1,\cdots,J  \text { and } k=0,1,\cdots,K\big\},\\
&X_{JK}  = \left\{u = (u_{j,k})_{(j,k)\in \T^0_{JK}}|  u_{0,k} = u_{J,k} = u_{j,0}=u_{j,K} = 0,~\forall (j,k)\in \T^0_{JK}\right\},
\end{align*}
equipped with inner products and norms as
\begin{align*}
&(u,v)_h = \Delta x\Delta y\sum_{j=0}^{J-1}\sum_{k=0}^{K-1}u_{j,k}\bar{v}_{j,k},~\|u\|^2_{2,h} = ( u,u)_h,~\|u\|^p_{p,h} = \Delta x \Delta y\sum_{j=0}^{J-1}\sum_{k=0}^{K-1}|u_{j,k}|^p,\\
& \langle u,v\rangle_h = \Delta x\Delta y\sum_{j=1}^{J-1}\sum_{k=1}^{K-1}u_{j,k}\bar{v}_{j,k}, |||u|||^2_{2,h} = \langle u,u\rangle_h,  ~ \|u\|_{\infty,h} = \sup_{(j,k)\in\T^0_{JK}}|u_{j,k}|,\\
&|u|^2_{1,h}=\|\delta^+u\|^2_{2,h}= \|\delta_x^+ u\|^2_{2,h} + \|\delta_y^+ u\|^2_{2,h}.
\end{align*}
One has the following discrete equalities and inequalities for $u,v\in X_{JK}$:
\begin{align}
&\langle \delta_x u,v \rangle_h =- \langle  u, \delta_x v \rangle_h,~~~~\langle \delta^2_x u,v \rangle_h = - ( \delta_x^+ u,\delta^+_x v)_h\label{discretegreen_x},\\
&\langle \delta_y u,v \rangle_h =- \langle u, \delta_y v \rangle_h,~~~~\langle \delta^2_y u,v \rangle_h = - ( \delta_y^+ u,\delta^+_y v)_h\label{discretegreen_y},\\
&\|u\|_{2,h}^2\leq \|\delta^+u\|^2_{2,h}, ~~~\|u\|^4_{4,h}\leq \|u\|_{2,h}^2\|\delta^+u\|^2_{2,h}\label{ineq:poincare1}.
%	\|u\|_{2,h} = |\|u\||_{2,h},~~|\|\delta_xu\||_{2,h}\leq \|\delta^+_xu\|_{2,h}, |\|\delta_yu\||_{2,h}\leq \|\delta^+_yu\|_{2,h}.
\end{align}
The readers can see the proofs of the relations above in \cite{Bao2013a}.\\
Now, let $f(s)=\frac{s}{1+s}$ and  $F(\rho)=\int_0^\rho f(s)ds=\rho-\ln(|1+\rho|)$. The CNFD discretization of equation \eqref{eq:NLS} can be written in the following form:
\begin{align}\label{CNFD}
\left\{
\begin{array}{c}
i\delta_t^+U^n_{j,k}+\delta^2 U_{j,k}^{n+1/2} + \lambda \psi(U^{n+1}_{j,k},U^n_{j,k}) +i\varepsilon \varphi(U^{n+1}_{j,k},U^n_{j,k}) =0, ~~\forall (j,k) \in \T_{JK},\\
U^0_{j,k} = u_0(x_j,y_k), ~\forall (j,k) \in \T^0_{JK}, ~ U^{n+1}\in X_{JK},
\end{array}
\right.
\end{align}
where $U^{n+1/2}_{j,k} = \frac{U^{n+1}_{j,k}+U^n_{j,k}}{2}$,
$\psi(z,w)=\frac{F(|z|^2)-F(|w|^2)}{|z|^2-|w|^2}\frac{z+w}{2}$ or
\begin{align*} 												     \psi(z,w)=\frac{z+w}{2}\int^1_0f(|w|^2+t(|z|^2-|w|^2))dt,
\end{align*}
and
\begin{align*}
	\varphi(z,w) = |\frac{z+w}{2}|^2\frac{z+w}{2}.
\end{align*}
Additionally, one can define the discrete energies (or Hamiltonian):
\begin{align}
	E_h(U^n) &= |U^{n}|^2_{1,h} -\lambda \|F(|U^n|^2)\|_{1,h},\label{df:E_h}\\
	\E_h(U^n) &= |U^{n}|^2_{1,h}\label{df:E_h1}.
\end{align}
The settings above play a crucial role in our analysis and estimations for the existence and uniqueness of the numerical solution and for proving the convergence of the CNFD scheme. 
%And SIFD scheme
%\begin{align}\label{SIFD}
%\left\{
%\begin{array}{c}
%i\delta_t U^n_{j,k}+ (i\dd\cdot\delta + \delta^2)\frac{U^{n+1}_{jk}+U^{n-1}_{jk}}{2} + \lambda U^n_{i,k}|U^{n}_{j,k}|^2 + i\varepsilon U^n_{i,k}|U^{n}_{j,k}|^2 =0 ~~\forall (j,k) \in \T_{JK}\\
%U^0_{j,k} = u_0(x_j,y_k), ~\forall (j,k) \in \T^0_{JK}; U^{n+1}\in X_{JK}
%\end{array}
%\right.
%\end{align}
%with $U^1\in X_{JK}$ is calculated 
%\begin{align}\label{SIFD1}
%\left\{
%\begin{array}{c}
%i\frac{U^1_{j,k}-U^0_{j,k}}{\tau}+ (i\dd\cdot\delta + \delta^2)U^{(1)}_{j,k} + \lambda U^{(1)}_{j,k}|U^{(1)}_{j,k}|^2 + i\varepsilon U^{(1)}_{j,k}|U^{(1)}_{j,k}|^2=0 ~~\forall (j,k) \in \T_{JK}\\
%i\frac{U^{(1)}_{j,k}-U^0_{j,k}}{\tau/2}+ (i\dd\cdot\delta + \delta^2)U^{0}_{j,k} + \lambda U^{0}_{j,k}|U^{0}_{j,k}|^2+ i\varepsilon  U^{0}_{j,k}|U^{0}_{j,k}|^2 =0 ~~\forall (j,k) \in \T_{JK}\\
%\end{array}
%\right.
%\end{align}
\subsection{Main results}
From now on, without loss of generality, we assume $\Delta x = \Delta y = h$. The main results of the paper are the following two theorems.
\begin{thm}[Unique solution]\label{thm:existsol_uniqueSolution_CNFD}
	For any given $U^n$, there	exists a unique solution $U^{n+1}$ of the CNFD discretization in (\ref{CNFD}).
\end{thm}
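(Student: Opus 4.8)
The plan is to recast the implicit step as a root–finding problem for a continuous self–map of the finite–dimensional space $X_{JK}$ and to separate the two claims: existence from a standard consequence of Brouwer's fixed–point theorem, uniqueness from a discrete energy (contraction) estimate. Throughout I would treat $U^n\in X_{JK}$ as given and work with the Crank--Nicolson midpoint $w:=U^{n+1/2}=\tfrac{U^{n+1}+U^n}{2}$ as the unknown, since $U^{n+1}=2w-U^n$ depends affinely on $w$ and the scheme \eqref{CNFD} is most naturally coercive in $w$. Accordingly I would define the continuous map $H:X_{JK}\to X_{JK}$ by $H(w)=\tfrac{2i}{\tau}(w-U^n)+\delta^2 w+\lambda\,\psi(2w-U^n,U^n)+i\varepsilon\,\varphi(2w-U^n,U^n)$, so that $H(w)=0$ is equivalent to \eqref{CNFD}, and regard $X_{JK}$ as a real inner–product space under $\Re\langle\cdot,\cdot\rangle_h$.

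For existence I would invoke the corollary of Brouwer's theorem stating that a continuous $P:X_{JK}\to X_{JK}$ with $\Re\langle P(w),w\rangle_h\ge 0$ on the sphere $|||w|||_{2,h}=R$ must vanish somewhere in the ball $|||w|||_{2,h}\le R$, applied to $P=-iH$, for which $\Re\langle -iH(w),w\rangle_h=\Im\langle H(w),w\rangle_h$. Testing $H(w)$ against $w$ and taking the imaginary part, the dispersion term is real since $\langle\delta^2 w,w\rangle_h=-|w|^2_{1,h}$ by the discrete Green formulas \eqref{discretegreen_x}--\eqref{discretegreen_y}, and the saturable term is real because $\psi(2w-U^n,U^n)=w\!\int_0^1 f(\cdots)\,dt$ with $f$ real, so $\langle\psi,w\rangle_h\in\R$. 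Hence only the $\tfrac{2i}{\tau}(w-U^n)$ and the $\varphi$ contributions survive, giving $\Im\langle H(w),w\rangle_h=\tfrac{2}{\tau}\big(|||w|||_{2,h}^2-\Re\langle U^n,w\rangle_h\big)+\varepsilon\,\|w\|_{4,h}^4$. Since $\Re\langle U^n,w\rangle_h\le|||U^n|||_{2,h}\,|||w|||_{2,h}$ and $\varepsilon\ge 0$, this is $\ge\tfrac{2}{\tau}R\big(R-|||U^n|||_{2,h}\big)>0$ whenever $R>|||U^n|||_{2,h}$, which produces a zero of $H$ and hence a solution $U^{n+1}$. The same identity evaluated at a solution gives the a priori decay $|||U^{n+1}|||_{2,h}\le|||U^n|||_{2,h}$, which I would reuse below.

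For uniqueness I would take two solutions $U^{n+1}_1,U^{n+1}_2$ sharing the same $U^n$, set $w_i=\tfrac{U^{n+1}_i+U^n}{2}$ and $e=w_1-w_2$ (so $U^{n+1}_1-U^{n+1}_2=2e$), subtract the two copies of \eqref{CNFD}, test against $e$, and take the imaginary part. The Laplacian term drops out as before, and the cubic–loss term is harmless: since $\varphi(2w_i-U^n,U^n)=|w_i|^2w_i$ and $m\mapsto|m|^2m$ is monotone in the sense $\Re\big[(|w_1|^2w_1-|w_2|^2w_2)\overline{(w_1-w_2)}\big]\ge 0$, it enters with a favorable sign and can simply be discarded. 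One is then left with $\tfrac{2}{\tau}|||e|||_{2,h}^2\le|\lambda|\,\big|\Im\langle\psi(U^{n+1}_1,U^n)-\psi(U^{n+1}_2,U^n),e\rangle_h\big|$, and the saturable difference is controlled using the Lipschitz bound $\|f'\|_\infty\le 1$ on $f(s)=\tfrac{s}{1+s}$ together with the a priori $L^2$ bound and a discrete inverse inequality to dominate the pointwise factors in the discrete $\infty$–norm. This produces a constant $C=C(\lambda,U^n)$ with $\tfrac{2}{\tau}|||e|||_{2,h}^2\le C\,|||e|||_{2,h}^2$, forcing $e=0$, hence $U^{n+1}_1=U^{n+1}_2$, once $\tau$ is below the mild threshold $\tau<2/C$.

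The main obstacle is the uniqueness estimate, not existence: because the saturable $\psi$ is not monotone — unlike the cubic–loss term, whose monotonicity I exploit to discard it — its difference must be bounded by genuine Lipschitz and embedding arguments, and this is precisely where the boundedness of the discrete solution and the mild restriction on $\tau$ enter. Establishing that $\psi$ is globally Lipschitz with the expected constant (by differentiating its integral representation and using $0\le f'\le 1$) and assembling the pointwise bounds into a clean $|||e|||_{2,h}^2$ estimate is the only technically delicate step; the Brouwer argument and the discrete integration–by–parts identities are routine given \eqref{discretegreen_x}--\eqref{ineq:poincare1}.
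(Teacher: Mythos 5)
Your existence argument is essentially the paper's: both recast the implicit step as a zero problem for the midpoint unknown $w=U^{n+1/2}$, check that the relevant map $\Pi$ satisfies $\Re\langle\Pi(w),w\rangle_h\geq\|w\|_{2,h}\left(\|w\|_{2,h}-\|U^n\|_{2,h}\right)$ on a sphere of radius slightly larger than $\|U^n\|_{2,h}$, and invoke the Brouwer-type Lemma \ref{lm:existencesolution}. That part is correct, and your observation that the cubic-loss difference can simply be discarded by the monotonicity of $m\mapsto|m|^2m$ is a genuine (minor) simplification over the paper, which instead bounds that difference pointwise.

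The gap is in how you close the uniqueness estimate. You propose to bound $\big|\Im\langle\psi(U^{n+1}_1,U^n)-\psi(U^{n+1}_2,U^n),e\rangle_h\big|$ by $C\,|||e|||_{2,h}^2$ with $C=C(\lambda,U^n)$, controlling the pointwise quadratic factors $|U^{n+1}_i|^2,|U^n|^2$ in $\|\cdot\|_{\infty,h}$ via ``the a priori $L^2$ bound and a discrete inverse inequality.'' In two spatial dimensions this does not produce an $h$-independent constant: the inverse inequality reads $\|u\|_{\infty,h}\lesssim h^{-1}\|u\|_{2,h}$, so each quadratic factor costs $h^{-2}$ and your smallness condition degenerates to $\tau\lesssim h^2$ rather than a mild $h$-independent threshold (no uniform $L^\infty$ bound follows from the $L^2$, or even the discrete $H^1$, a priori bounds in 2D). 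This matters downstream, since the convergence proof identifies $V^n$ with $U^n$ through uniqueness under the weaker assumption $\tau\lesssim h$. The paper avoids the $L^\infty$ route entirely: it takes both the real and the imaginary parts of the tested difference equation to obtain the pair of estimates $\|\chi\|_{2,h}^2\le c_1\tau\|\chi\|_{4,h}^2$ and $|\chi|_{1,h}^2\le c_2\|\chi\|_{4,h}^2$, with $c_1,c_2$ depending only on $\|U^{n+1}_i\|_{4,h}$ and $\|U^n\|_{4,h}$ --- quantities bounded uniformly in $h,\tau$ via mass boundedness, the energy bound of Lemma \ref{lm:EUnbounded}, Lemma \ref{lm:bound_gradient_byEnergy}, and \eqref{ineq:poincare1} --- and then combines the two through the discrete Gagliardo--Nirenberg inequality $\|\chi\|_{4,h}^4\le\|\chi\|_{2,h}^2\,|\chi|_{1,h}^2$ to conclude $\|\chi\|_{4,h}^4\le c_1c_2\tau\|\chi\|_{4,h}^4$. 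You would need this $L^4$/$H^1$ bookkeeping (or some other substitute for the unavailable uniform $L^\infty$ control) to reach the stated conclusion with a $\tau$-restriction independent of $h$.
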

Theorem \ref{thm:existsol_uniqueSolution_CNFD} is proved in Subsection \ref{subsect_exist_unique_sol}.
\begin{thm}[Convergence]\label{thm:err_uandU_L2H01norm}
	Let the solution of (\ref{eq:NLS}) be smooth enough and $\{U^n\}_{n=0}^{N}$ satisfy (\ref{CNFD}). For $\tau\lesssim h $, there exists a constant $c > 0$ which is independent of $h$ and $\tau$ such that
	\begin{align}
        \max_{n\in [1,N]}\|u^n-U^n\|_{2,h} \leq c(\tau^2+h^2),\label{ineq:errorUandu_L2}\\
		\max_{n\in [1,N]}|u^n-U^n|_{1,h} \leq c(\tau^2+h^2).\label{ineq:errorUandu_H1}
	\end{align}
\end{thm}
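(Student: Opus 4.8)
The plan is to run a discrete energy argument closed by a discrete Gronwall inequality, with the nonlinear terms controlled through an inductive bootstrap. First I would set $e^n = u^n - U^n$, where $u^n_{j,k}=u(x_j,y_k,t_n)$ is the exact smooth solution sampled on the grid; since $u$ vanishes on $\Gamma$ and $U^n\in X_{JK}$, we have $e^n\in X_{JK}$. Substituting $u^n$ into (\ref{CNFD}) produces a residual $R^n_{j,k}$, and the consistency result of Appendix \ref{Appendix} guarantees $\max_n\|R^n\|_{2,h}\lesssim \tau^2+h^2$ (the midpoint/integral forms of $\psi$ and $\varphi$ being designed for second-order accuracy). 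Subtracting (\ref{CNFD}) gives the error equation
\begin{align*}
i\delta_t^+ e^n_{j,k} + \delta^2 e^{n+1/2}_{j,k} + \lambda\big[\psi(u^{n+1}_{j,k},u^n_{j,k})-\psi(U^{n+1}_{j,k},U^n_{j,k})\big] + i\varepsilon\big[\varphi(u^{n+1}_{j,k},u^n_{j,k})-\varphi(U^{n+1}_{j,k},U^n_{j,k})\big] = R^n_{j,k}.
\end{align*}

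The core estimate comes from pairing this identity with $e^{n+1/2}$ in $\langle\cdot,\cdot\rangle_h$ and taking imaginary parts. The time-difference term yields exactly $\frac{1}{2\tau}(\|e^{n+1}\|^2_{2,h}-\|e^n\|^2_{2,h})$, while $\langle\delta^2 e^{n+1/2},e^{n+1/2}\rangle_h=-|e^{n+1/2}|^2_{1,h}$ is real by (\ref{discretegreen_x})--(\ref{discretegreen_y}) and drops out. The decisive point is to bound the two nonlinear contributions. Because $f(s)=s/(1+s)$ has bounded derivatives on $[0,\infty)$, the map $\psi(z,w)$ is Lipschitz in $(z,w)$ on any ball, and $\varphi(z,w)=|\frac{z+w}{2}|^2\frac{z+w}{2}$ is a smooth cubic, hence also Lipschitz on bounded sets; thus, provided $\|u^m\|_{\infty,h}$ and $\|U^m\|_{\infty,h}$ are bounded by a common constant $M$, there is $L=L(M)$ with
\begin{align*}
\big|\psi(u^{m+1},u^m)-\psi(U^{m+1},U^m)\big| + \big|\varphi(u^{m+1},u^m)-\varphi(U^{m+1},U^m)\big| \leq L\big(|e^{m+1}|+|e^m|\big).
\end{align*}
With Cauchy--Schwarz and Young's inequality this reduces the energy identity to $\|e^{n+1}\|^2_{2,h}-\|e^n\|^2_{2,h}\leq C\tau\big(\|e^{n+1}\|^2_{2,h}+\|e^n\|^2_{2,h}\big)+C\tau\|R^n\|^2_{2,h}$.

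I expect the \emph{main obstacle} to be that this Lipschitz bound is only legitimate once $\|U^m\|_{\infty,h}\leq M$ is known, whereas the boundedness lemma controls only $|U^m|_{1,h}$, which in two dimensions does not embed into $L^\infty$ uniformly in $h$. I would resolve this by an induction on $n$ whose hypothesis carries both the error estimate and the pointwise bound $\|U^m\|_{\infty,h}\leq M$ for $m\leq n$. Under the hypothesis, the energy inequality summed in $m$ and fed into the discrete Gronwall lemma with $e^0=0$ yields $\|e^{n+1}\|_{2,h}\leq c(\tau^2+h^2)$; the elementary inverse inequality $\|e^{n+1}\|_{\infty,h}\leq h^{-1}\|e^{n+1}\|_{2,h}\leq c\,h^{-1}(\tau^2+h^2)$ then closes the induction precisely because the assumption $\tau\lesssim h$ forces $h^{-1}(\tau^2+h^2)\lesssim h\to 0$, so that $\|U^{n+1}\|_{\infty,h}\leq \|u^{n+1}\|_{\infty,h}+\|e^{n+1}\|_{\infty,h}\leq M$ for $h$ small. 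Finally, to obtain (\ref{ineq:errorUandu_H1}) I would repeat the energy argument pairing the error equation with $\delta_t^+ e^n$ and taking real parts; (\ref{discretegreen_x})--(\ref{discretegreen_y}) produce the telescoping difference $\frac{1}{2\tau}(|e^{n+1}|^2_{1,h}-|e^n|^2_{1,h})$, and since $\delta_t^+ e^n$ itself is not directly controlled, the nonlinear and source terms are handled by summation by parts in time, transferring the time difference onto the smooth data and invoking the already-established $L^2$ bound, after which the discrete Gronwall lemma and the condition $\tau\lesssim h$ again deliver the $H^1$-seminorm estimate.
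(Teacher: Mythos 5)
Your overall architecture (consistency $+$ Lipschitz stability $+$ discrete Gr\"onwall $+$ inverse inequality under $\tau\lesssim h$) is the right one, and you correctly identified the central obstacle: the Lipschitz bounds on $\psi$ and $\varphi$ are only valid on balls, and the discrete $H^1$ bound does not give uniform $L^\infty$ control in 2D. However, your proposed resolution by induction on $n$ does not close as stated, because the Crank--Nicolson scheme is \emph{implicit}: the error equation at time level $n$ contains $\psi(U^{n+1},U^n)$ and $\varphi(U^{n+1},U^n)$, so the Lipschitz constant $L(M)$ you need in order to derive the energy inequality producing $\|e^{n+1}\|_{2,h}$ already requires $\|U^{n+1}\|_{\infty,h}\le M$ --- which is exactly what the induction step is supposed to deliver and is not part of the hypothesis for $m\le n$. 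Since $\varphi$ is cubic (and the $|z|^2$-dependence of $\psi$ is quadratic), neither map is globally Lipschitz, so you cannot simply drop this requirement; trying to absorb the unbounded factors leads to terms like $\|e^{n+1/2}\|_{4,h}^4$ that reintroduce an uncontrolled $|e^{n+1/2}|_{1,h}$ via \eqref{ineq:poincare1}. This chicken-and-egg is precisely what the paper avoids by the cut-off technique: it replaces $\psi,\varphi$ by globally Lipschitz truncations $\tilde\psi,\tilde\varphi$ (using the bound $M$ on the exact solution), proves the $L^2$ error estimate \emph{unconditionally} for the auxiliary solution $V^n$ of the truncated scheme via Gr\"onwall, then uses the inverse inequality and $\tau\lesssim h$ to show $|V^n_{j,k}|\le M$ for small $h$, so that $V^n$ solves the original scheme and equals $U^n$ by the uniqueness of Theorem \ref{thm:existsol_uniqueSolution_CNFD}. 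Your argument can be repaired by adopting this truncation step; without it (or some substitute such as a two-level bootstrap that first establishes a rough bound on $U^{n+1}$), the induction is incomplete.

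Two smaller points. First, your $H^1$ step is in the right spirit (the paper tests with $e^{n+1}-e^n=\tau\,\delta_t^+e^n$ and takes real parts, then substitutes for $e^{n+1}-e^n$ from the error equation itself rather than summing by parts in time), but it requires not only $\|r^n\|_{2,h}\lesssim\tau^2+h^2$ but also the first-order difference bound $|\delta^+r^n|\lesssim\tau^2+h^2$ of Lemma \ref{lm:est_H01rn}, which your sketch does not mention. Second, the correct 2D inverse inequality is $\|\omega\|_{\infty,h}\le h^{-1}\|\omega\|_{2,h}$, as you wrote; you used it correctly, and the condition $\tau\lesssim h$ enters exactly where you placed it.
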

The proof of Theorem \ref{thm:err_uandU_L2H01norm} is presented in Subsection \ref{subsect_convergence}.
To prove the theorems above, we first establish some lemmas for the boundedness of discrete solutions, stability, and error estimates.
\section{Error Estimates}
\label{Error}
Noting that for a solution $u(x,y,t)$ of \eqref{eq:NLS}, it can be shown that (see for example, equation (6) in Ref. \cite{NH2021}):
\begin{align*}
\|u(\cdot,t)\|_{2} \leq\|u_0\|_{2}, \forall t\in (0,T).
%\label{inq:ConservativeMass}
\end{align*}
 First, we study the boundedness of the discrete solutions and of the discrete energies.
\subsection{Boundedness of discrete solutions}
\begin{lm}[Boundedness of discrete solutions]\label{lm:discreteConservativeMass}
	Given $\{U^n\}_{n=0,1,\cdots,N}$ be the solution in the CNFD scheme (\ref{CNFD}),  then
	\begin{align}
		\|U^n\|^2_{2,h} \leq \|U^0\|^2_{2,h}\label{discreteConservativeMass}.
	\end{align}
\end{lm}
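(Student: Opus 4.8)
The plan is to run a discrete mass (energy) argument: pair the scheme \eqref{CNFD} with the Crank--Nicolson midpoint value $U^{n+1/2}$ in the interior inner product $\langle\cdot,\cdot\rangle_h$, and then extract the imaginary part of the resulting scalar identity. Concretely, I would test \eqref{CNFD} against $U^{n+1/2}_{j,k}$, summing over $(j,k)\in\T_{JK}$, to obtain
\[
i\langle \delta_t^+U^n, U^{n+1/2}\rangle_h + \langle \delta^2 U^{n+1/2}, U^{n+1/2}\rangle_h + \lambda\langle \psi(U^{n+1},U^n), U^{n+1/2}\rangle_h + i\varepsilon\langle \varphi(U^{n+1},U^n), U^{n+1/2}\rangle_h = 0.
\]
Since every $U^n\in X_{JK}$ vanishes on $\Gamma$, the interior norm $|||\cdot|||_{2,h}$ coincides with $\|\cdot\|_{2,h}$, so I may freely write $\|\cdot\|_{2,h}$ throughout.

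Next I would show that the three middle/Laplacian/nonlinear terms contribute nothing to the imaginary part. For the dispersive term, the discrete summation-by-parts identities \eqref{discretegreen_x}--\eqref{discretegreen_y} give $\langle \delta^2 U^{n+1/2}, U^{n+1/2}\rangle_h = -|U^{n+1/2}|^2_{1,h}$, which is real. For the saturable term, the key is the special algebraic form of $\psi$: since $\psi(U^{n+1}_{j,k},U^n_{j,k}) = \frac{F(|U^{n+1}_{j,k}|^2)-F(|U^n_{j,k}|^2)}{|U^{n+1}_{j,k}|^2-|U^n_{j,k}|^2}\,U^{n+1/2}_{j,k}$ with a \emph{real} difference quotient (as $F$ is real-valued), pairing against $\overline{U^{n+1/2}_{j,k}}$ produces that real factor times $|U^{n+1/2}_{j,k}|^2$; hence $\lambda\langle \psi, U^{n+1/2}\rangle_h$ is real. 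For the loss term, $\varphi(U^{n+1}_{j,k},U^n_{j,k})=|U^{n+1/2}_{j,k}|^2 U^{n+1/2}_{j,k}$, so $\langle \varphi, U^{n+1/2}\rangle_h = \|U^{n+1/2}\|^4_{4,h}\in\R_{\ge 0}$, and the factor $i\varepsilon$ makes this term purely imaginary.

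Finally, the time-difference term carries the mass balance. Expanding $\langle U^{n+1}-U^n, U^{n+1}+U^n\rangle_h$ and noting that the cross terms combine into $2i\,\Im\langle U^{n+1},U^n\rangle_h$, I get
\[
\Im\langle i\delta_t^+U^n, U^{n+1/2}\rangle_h = \frac{1}{2\tau}\left(\|U^{n+1}\|^2_{2,h}-\|U^n\|^2_{2,h}\right).
\]
Collecting imaginary parts then yields $\frac{1}{2\tau}\left(\|U^{n+1}\|^2_{2,h}-\|U^n\|^2_{2,h}\right) + \varepsilon\|U^{n+1/2}\|^4_{4,h}=0$, so that $\|U^{n+1}\|^2_{2,h}-\|U^n\|^2_{2,h} = -2\tau\varepsilon\|U^{n+1/2}\|^4_{4,h}\le 0$ because $\varepsilon\ge 0$; a trivial induction on $n$ gives \eqref{discreteConservativeMass}. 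The only delicate point is the term-by-term verification that the dispersive and saturable contributions are real—everything hinges on exploiting the midpoint factorization of $\psi$ and $\varphi$ together with the realness of the $F$-difference quotient—while the sign of the dissipative correction follows immediately from $\varepsilon\ge 0$.
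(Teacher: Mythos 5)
Your proposal is correct and follows essentially the same route as the paper: test the scheme against $U^{n+1/2}$, observe that the dispersive and saturable terms are real (via summation by parts and the realness of the $F$-difference quotient), and read off the mass decay $\|U^{n+1}\|^2_{2,h}-\|U^n\|^2_{2,h}=-2\tau\varepsilon\|U^{n+1/2}\|^4_{4,h}\le 0$ from the imaginary part. No gaps.
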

\begin{proof}
    Multiplying the first term in the scheme \eqref{CNFD} with $\overline{U^{n+1/2}_{j,k}}$ and then summing over all  $j=1,\cdots,J-1$ and $k=1,\cdots,K-1$, it yields
	\begin{align*}
		i\langle\delta_t^+U^n,U^{n+1/2}\rangle_h = i\frac{\|U^{n+1}\|^2_{2,h}-\|U^n\|^2_{2,h}}{2\tau} + \frac{i}{2\tau}\sum_{k=1}^{K-1}\sum_{j=1}^{J-1}(U^{n+1}_{j,k}\overline{U^n_{j,k}} - U^{n}_{j,k}\overline{U^{n+1}_{j,k}}).
	\end{align*}
Since
	$$
\Re\left(U^{n+1}_{j,k}\overline{U^n_{j,k}} - U^{n}_{j,k}\overline{U^{n+1}_{j,k}}\right) = 0,
	$$
then
\begin{align}\label{impartdt}
\Im\left(i\langle\delta_t^+U^n,U^{n+1/2}\rangle_h\right) =\frac{\|U^{n+1}\|^2_{2,h}-\|U^n\|^2_{2,h}}{2\tau}.
	\end{align}
Using (\ref{discretegreen_x}) and (\ref{discretegreen_y}), it implies
	\begin{align}
		\langle\delta^2_x U^{n+1/2},U^{n+1/2}\rangle_h = -(\delta^+_x U^{n+1/2},\delta^+_xU^{n+1/2})_h\label{impartdiffusion_x},\\
		\langle\delta^2_y U^{n+1/2},U^{n+1/2}\rangle_h = -(\delta^+_y U^{n+1/2},\delta^+_y U^{n+1/2})_h\label{impartdiffusion_y}.
	\end{align}
We take in (\ref{CNFD}) the inner product with $U^{n+1/2}$. By using (\ref{impartdt}),  (\ref{impartdiffusion_x}), and (\ref{impartdiffusion_y})  and taking imaginary parts, it yields
	\begin{align*}%\label{eq:conversivemass}
		\frac{\|U^{n+1}\|^2_{2,h}- \|U^{n}\|^2_{2,h}}{2\tau} + \varepsilon\sum_{j=1}^{J-1}\sum_{k=1}^{K-1}|U^{n+1/2}_{j,k}|^4=0.
	\end{align*}
One can estimate the term $\|U^{n+1}\|^2_{2,h}$ as
	\begin{align*}
		\|U^{n+1}\|^2_{2,h} \leq \|U^{n}\|^2_{2,h}.
	\end{align*}
This arrives at
  \begin{align*}
\|U^{n}\|^2_{2,h} \leq \|U^{0}\|^2_{2,h}.
	\end{align*} 
%	\qed
\end{proof}
We note that when $\varepsilon=0$, one has the conservation $\|U^{n}\|^2_{2,h} = \|U^{0}\|^2_{2,h}$, for all $n=0,1,\cdots,N$. 
Next, the discrete $H^1(\Omega)$-norm of the discrete solution $U^n$ is estimated by the following lemma.
\begin{lm}\label{lm:bound_gradient_byEnergy}
Given $U^m\in X_{JK}$, then
\begin{align*}%\label{lm:est_H01Un}
	|U^m|^2_{1,h} \leq E_h(U^m) + |\lambda|\|U^m\|^2_{2,h}.
\end{align*}
\end{lm}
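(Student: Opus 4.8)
The plan is to reduce the claim to an elementary pointwise estimate on the primitive $F$. Substituting the definition \eqref{df:E_h} of $E_h$ into the right-hand side, the asserted inequality $|U^m|^2_{1,h} \le E_h(U^m) + |\lambda|\,\|U^m\|^2_{2,h}$ becomes, after cancelling the common term $|U^m|^2_{1,h}$, equivalent to
\begin{align*}
\lambda\,\|F(|U^m|^2)\|_{1,h} \le |\lambda|\,\|U^m\|^2_{2,h}.
\end{align*}
Thus it suffices to control the discrete $\|\cdot\|_{1,h}$-mass of $F(|U^m|^2)$ by the squared $\|\cdot\|_{2,h}$-norm of $U^m$, with the correct sign handling for $\lambda$.

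The key step is the scalar bound $0 \le F(\rho) \le \rho$ for every $\rho \ge 0$. This follows directly from $F(\rho) = \rho - \ln(1+\rho)$ together with the standard inequalities $0 \le \ln(1+\rho) \le \rho$ on $[0,\infty)$; equivalently, $F(0)=0$ and $F'(\rho) = f(\rho) = \rho/(1+\rho) \in [0,1]$, so $F$ is nondecreasing with slope at most one. In particular $F(|U^m_{j,k}|^2) \ge 0$, so the absolute value implicit in the definition of $\|F(|U^m|^2)\|_{1,h}$ may be dropped.

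Applying this bound at each grid point with $\rho = |U^m_{j,k}|^2$ and summing against the weight $\Delta x\,\Delta y$ yields
\begin{align*}
\|F(|U^m|^2)\|_{1,h} = \Delta x\,\Delta y \sum_{j,k} F\!\left(|U^m_{j,k}|^2\right) \le \Delta x\,\Delta y \sum_{j,k} |U^m_{j,k}|^2 = \|U^m\|^2_{2,h}.
\end{align*}
Since $\|F(|U^m|^2)\|_{1,h} \ge 0$, I finish by a sign argument on $\lambda$: when $\lambda \ge 0$ one has $\lambda\,\|F(|U^m|^2)\|_{1,h} \le |\lambda|\,\|F(|U^m|^2)\|_{1,h} \le |\lambda|\,\|U^m\|^2_{2,h}$, while for $\lambda < 0$ the left-hand side is nonpositive and the right-hand side nonnegative. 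Either way the reduced inequality holds, which is the assertion. I expect no genuine obstacle here: the only nontrivial ingredient is the elementary envelope $0 \le F(\rho) \le \rho$, and once that is in hand the passage to the discrete norms is just termwise summation.
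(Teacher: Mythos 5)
Your proof is correct and follows essentially the same route as the paper: both arguments reduce the claim to the elementary envelope $0 \le F(\rho) \le \rho$ (i.e.\ $0 \le \ln(1+\rho) \le \rho$) and split on the sign of $\lambda$, with your version merely packaging the case analysis more compactly after cancelling $|U^m|^2_{1,h}$.
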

\begin{proof}The proof is divided two parts.\\ 
{\bf  Part I:  $\lambda \leq 0$}. From the definition of the discrete energy in \eqref{df:E_h}, it implies
\begin{align*}
	E_h(U^m) &= |U^{m}|^2_{1,h}  -\lambda\|F(|U^{m}|^2)\|_{1,h} \geq |U^{m}|^2_{1,h}.
\end{align*}
Thus,
\begin{align*}
	|U^m|^2_{1,h} \leq E_h(U^m) + |\lambda|\|U^m\|^2_{2,h}.
\end{align*}
{\bf Part II:} $\lambda > 0$. From the definition of the discrete energy in \eqref{df:E_h}, one gets
\begin{align*}
	E_h(U^m) &= |U^{m}|^2_{1,h}  -\lambda\|F(|U^{m}|^2)\|_{1,h}\\
	& \geq |U^m|^2_{1,h} -  \lambda \|U^m\|^2_{2,h} +\lambda\|\ln(1+|U^m|^2)\|_{1,h}.
\end{align*}
Thus,
\begin{align*}
	|U^m|^2_{1,h}  \leq E(U^m) + \lambda \|U^m\|^2_{2,h}.
\end{align*}
Via the results of {\bf Part I} and {\bf Part II}, the proof is completed. %\qed
\end{proof}

\begin{lm}\label{lm:est:Im_Un+1_Un_U_n2}
Given $\{U^n\}_{n=0}^{N}$ be the solution in the scheme \eqref{CNFD}, then
    \begin{align}\label{est:Im_Un+1_Un_U_n2}
\Im\left(\langle U^{n+1},{U^n}|U^{n+1/2}|^2\rangle_h\right)
\leq 
		\tau\lambda\langle \frac{F(|U^{n+1}|^2)-F(|U^n|^2)}{|U^{n+1}|^2-|U^n|^2}|U^{n+1/2}|^2,|U^{n+1/2}|^2\rangle_h,\nonumber \\
  ~\forall n=0,1,\cdots,N-1.
	\end{align}
\end{lm}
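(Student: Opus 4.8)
The plan is to recognize the left-hand side as, up to the factor $\tau$, the imaginary part of the inner product of the time-difference $\delta_t^+U^n$ against the cubic-loss nonlinearity $\varphi(U^{n+1},U^n)=|U^{n+1/2}|^2U^{n+1/2}$, and then to eliminate $\delta_t^+U^n$ using the scheme \eqref{CNFD}. As a first step I would establish the purely algebraic identity
\begin{align*}
\Im\left(\langle U^{n+1},U^n|U^{n+1/2}|^2\rangle_h\right) = \tau\,\Im\left(\langle \delta_t^+U^n,\varphi(U^{n+1},U^n)\rangle_h\right).
\end{align*}
This follows pointwise: writing $\overline{U^{n+1/2}}=\frac12(\overline{U^{n+1}}+\overline{U^n})$ and using that $|U^{n+1/2}|^2$ is real, one checks that $\Im\big((U^{n+1}-U^n)\overline{U^{n+1/2}}\big)=\Im\big(U^{n+1}\overline{U^n}\big)$ at every grid point; multiplying by $|U^{n+1/2}|^2$, summing over $\T_{JK}$, and recalling $U^{n+1}-U^n=\tau\,\delta_t^+U^n$ gives the identity.

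Next I would take the $\langle\cdot,\cdot\rangle_h$ inner product of \eqref{CNFD} with $\varphi(U^{n+1},U^n)$, solve for $\langle\delta_t^+U^n,\varphi\rangle_h$, and extract the imaginary part using $\Im(iz)=\Re(z)$, which yields
\begin{align*}
\Im\left(\langle\delta_t^+U^n,\varphi\rangle_h\right) = \Re\left(\langle\delta^2U^{n+1/2},\varphi\rangle_h\right) + \lambda\,\Re\left(\langle\psi(U^{n+1},U^n),\varphi\rangle_h\right).
\end{align*}
The cubic-loss contribution drops out because $\langle\varphi,\varphi\rangle_h=\|\varphi\|_{2,h}^2$ is real. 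For the saturable term, since $\psi=g\,U^{n+1/2}$ with $g=\frac{F(|U^{n+1}|^2)-F(|U^n|^2)}{|U^{n+1}|^2-|U^n|^2}$ real and $\varphi=|U^{n+1/2}|^2U^{n+1/2}$, the product collapses to the real quantity $g\,|U^{n+1/2}|^4$, so $\Re\langle\psi,\varphi\rangle_h$ equals exactly $\langle g\,|U^{n+1/2}|^2,|U^{n+1/2}|^2\rangle_h$, which is the inner product appearing on the right-hand side of the lemma. Combining with the first step and multiplying by $\tau>0$, the stated inequality follows as soon as the diffusion term is shown to be non-positive, and this holds for either sign of $\lambda$ since one only discards the diffusion contribution.

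The main obstacle is therefore the sign of the diffusion term, namely $\Re\left(\langle\delta^2U^{n+1/2},\varphi\rangle_h\right)\le 0$. Since every iterate lies in $X_{JK}$, so does $U^{n+1/2}$ and hence $\varphi$, and the discrete Green identities \eqref{discretegreen_x}--\eqref{discretegreen_y} reduce the claim to $\Re\big((\delta_x^+U^{n+1/2},\delta_x^+\varphi)_h\big)\ge 0$ together with its $y$-analogue. I would prove this pointwise: abbreviating $p=U^{n+1/2}_{j,k}$ and $q=U^{n+1/2}_{j+1,k}$, the relevant factor is $\Re\big[(q-p)\,\overline{(|q|^2q-|p|^2p)}\big]$, which expands to $|q|^4+|p|^4-(|q|^2+|p|^2)\Re(p\bar q)$. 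Using $\Re(p\bar q)\le|p||q|$ and the factorization
\begin{align*}
|q|^4+|p|^4-(|q|^2+|p|^2)|p||q| = (|q|-|p|)^2\big(|q|^2+|p||q|+|p|^2\big)\ge 0
\end{align*}
gives the desired non-negativity; summing over the grid completes the diffusion estimate and hence the lemma. I expect this discrete monotonicity of $z\mapsto|z|^2z$ to be the only genuinely nontrivial ingredient, the remainder being bookkeeping with the inner product and the imaginary-part extraction.
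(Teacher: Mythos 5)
Your proposal is correct and follows essentially the same route as the paper: test the scheme against $\varphi(U^{n+1},U^n)=U^{n+1/2}|U^{n+1/2}|^2$, observe that the cubic-loss term contributes nothing to the relevant real part, that the saturable term produces exactly the right-hand side, and that the discrete diffusion term has a sign, which via the Green identities reduces to the pointwise monotonicity of $z\mapsto|z|^2z$. The only (immaterial) difference is in that last pointwise step, where the paper uses the symmetric splitting $\bar q|q|^2-\bar p|p|^2=(\bar q-\bar p)\tfrac{|q|^2+|p|^2}{2}+\tfrac{\bar q+\bar p}{2}(|q|^2-|p|^2)$ to get two manifestly non-negative sums, while you use $\Re(p\bar q)\le|p||q|$ together with the factorization $|q|^4+|p|^4-(|q|^2+|p|^2)|p||q|=(|q|-|p|)^2(|q|^2+|p||q|+|p|^2)$.
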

\begin{proof}
From (\ref{CNFD}), one gets the following expression:
	\begin{align*}
		i\delta_t^+U^n_{j,k}\overline{U^{n+1/2}_{j,k}} = i\frac{|U^{n+1}_{j,k}|^2-|U^n_{j,k}|^2}{2\tau} + \frac{i}{2\tau}(U^{n+1}_{j,k}\overline{U^n_{j,k}} - U^{n}_{j,k}\overline{U^{n+1}_{j,k}})\\
		=i\frac{|U^{n+1}_{j,k}|^2-|U^n_{j,k}|^2}{2\tau} - \frac{1}{\tau}\Im(U^{n+1}_{j,k}\overline{U^n_{j,k}} ).
	\end{align*}
Taking in the previous relation the inner product with $|U^{n+1/2}|^2$ yields
	\begin{align}
		i\langle\delta_t^+U^n_{j,k},{U^{n+1/2}}|U^{n+1/2}|^2\rangle_h = \frac{i}{2\tau}\langle |U^{n+1}|^2-|U^n|^2,|U^{n+1/2}|^2\rangle_h
		\nonumber\\- \frac{1}{\tau}\Im(\langle U^{n+1},{U^n}|U^{n+1/2}|^2\rangle_h).\label{equa:delta_t_U^2}
	\end{align}
	Using (\ref{discretegreen_x}) and (\ref{discretegreen_y}), there hold
	\begin{align}
		\langle\delta^2_x U^{n+1/2},U^{n+1/2}|U^{n+1/2}|^2\rangle_h = -(\delta^+_x U^{n+1/2},\delta^+_x(U^{n+1/2}|U^{n+1/2}|^2))_h\label{equa:green_delta_x_U^2},\\
		\langle\delta^2_y U^{n+1/2},U^{n+1/2}|U^{n+1/2}|^2\rangle_h = -(\delta^+_y U^{n+1/2},\delta^+_y(U^{n+1/2}|U^{n+1/2}|^2))_h\label{equa:green_delta_y_U^2}.
	\end{align}
	From the definition of the 
 inner product $(\cdot,\cdot)_h$ and the discrete gradient, one has
\begin{eqnarray*}
\!\!\!\!\!\!\!\!\!\!\!\!\!\!\!\!\!\!\!
L_1&&= (\delta^+_x U^{n+1/2},\delta^+_x(U^{n+1/2}|U^{n+1/2}|^2))_h
%\!\!\!\!\!\!\!\!
 \nonumber \\&&
% \!\!\!\!\!\!\!\!
= \Delta x \Delta y\sum_{j=0}^{J-1}\sum_{k=0}^{K-1}\frac{U^{n+1/2}_{j+1,k}-U^{n+1/2}_{j,k}}{\Delta x}\frac{\overline{U^{n+1/2}_{j+1,k}}|U^{n+1/2}_{j+1,k}|^2-\overline{U^{n+1/2}_{j,k}}|U^{n+1/2}_{j,k}|^2}{\Delta x}.
%\!\!\!\!\!\!\!\!
\end{eqnarray*}
	Moreover 
	\begin{align*}		\overline{U^{n+1/2}_{j+1,k}}|U^{n+1/2}_{j+1,k}|^2-\overline{U^{n+1/2}_{j,k}}|U^{n+1/2}_{j,k}|^2 = (\overline{U^{n+1/2}_{j+1,k}} - \overline{U^{n+1/2}_{j,k}})\frac{|U^{n+1/2}_{j+1,k}|^2+|U^{n+1/2}_{j,k}|^2}{2}\nonumber\\ + \frac{\overline{U^{n+1/2}_{j+1,k}} + \overline{U^{n+1/2}_{j,k}}}{2}(|U^{n+1/2}_{j+1,k}|^2-|U^{n+1/2}_{j,k}|^2).
	\end{align*}
	Therefore,
	\begin{eqnarray*}
\!\!\!\!\!\!\!\!\!\!\!	
		L_1&&
		= \Delta x \Delta y\sum_{j=0}^{J-1}\sum_{k=0}^{K-1}\frac{U^{n+1/2}_{j+1,k}-U^{n+1/2}_{j,k}}{\Delta x}\frac{\overline{U^{n+1/2}_{j+1,k}} - \overline{U^{n+1/2}_{j,k}}}{\Delta x}\frac{|U^{n+1/2}_{j+1,k}|^2+|U^{n+1/2}_{j,k}|^2}{2}\nonumber \\&&	
		 +\Delta x \Delta y\sum_{j=0}^{J-1}\sum_{k=0}^{K-1}\frac{U^{n+1/2}_{j+1,k}-U^{n+1/2}_{j,k}}{\Delta x}\frac{\overline{U^{n+1/2}_{j+1,k}}+\overline{U^{n+1/2}_{j,k}}}{2}\frac{|U^{n+1/2}_{j+1,k}|^2-|U^{n+1/2}_{j,k}|^2}{\Delta x}.
	\end{eqnarray*}
	Taking the real part for the previous relation, it yields 
\begin{eqnarray*}
\!\!\!\!\!\!\!	
		\Re(L_1)&&	
%\!\!\!\!\!\!\!\!\!\!\!\!\!\!\!\!\!\!\!	
		= \frac{\Delta x \Delta y}{2}\sum_{j=0}^{J-1}\sum_{k=0}^{K-1}\bigg|\frac{U^{n+1/2}_{j+1,k}-U^{n+1/2}_{j,k}}{\Delta x}\bigg|^2(|U^{n+1/2}_{j+1,k}|^2+|U^{n+1/2}_{j,k}|^2)
%\!\!\!\!\!\!\!\!\!\!\!\!\!\!\!\!\!\!\!
\nonumber \\&&
%\!\!\!\!\!\!\!\!\!\!\!\!\!\!\!\!\!\!\!
+\frac{\Delta x \Delta y}{2} \sum_{j=0}^{J-1}\sum_{k=0}^{K-1}\bigg|\frac{|U^{n+1/2}_{j+1,k}|^2-|U^{n+1/2}_{j,k}|^2}{\Delta x}\bigg|^2
\!\!\!\!\!\!\!\!\!\!\!\!\!\!\!\!\!\!\!
\nonumber \\&&
		 =\frac{\Delta x \Delta y}{2}\sum_{j=0}^{J-1}\sum_{k=0}^{K-1}\bigg|\frac{U^{n+1/2}_{j+1,k}-U^{n+1/2}_{j,k}}{\Delta x}\bigg|^2(|U^{n+1/2}_{j+1,k}|^2+|U^{n+1/2}_{j,k}|^2)
		\nonumber\\
  &&+	\frac{1}{2}(\delta^+_x |U^{n+1/2}|^2,\delta_x^+|U^{n+1/2}|^2)_h.
	\end{eqnarray*}
It is easy to see that $\Re(L_1)\geq 0$. From \eqref{equa:green_delta_x_U^2} and the definition of $L_1$, one then obtains
\begin{align}
	\Re(\langle\delta^2_x U^{n+1/2},U^{n+1/2}|U^{n+1/2}|^2\rangle_h) \leq 0\label{inequa:Re_delta_x_U^2}.
\end{align}
By using \eqref{equa:green_delta_y_U^2} and implementing the similar calculations for respect $y$, it arrives at
\begin{align}
	\Re(\langle\delta^2_y U^{n+1/2},U^{n+1/2}|U^{n+1/2}|^2\rangle_h) \leq 0 \label{inequa:Re_delta_y_U^2}.
\end{align}
We	take in (\ref{CNFD}) the inner product with $U^{n+1/2}|U^{n+1/2}|^2$ and then take the real part. By  using \eqref{equa:delta_t_U^2}, \eqref{inequa:Re_delta_x_U^2} and \eqref{inequa:Re_delta_y_U^2}, it yields
	\begin{align*}
		- \frac{1}{\tau}\Im(\langle U^{n+1},{U^n}|U^{n+1/2}|^2\rangle_h)  + \lambda\langle \frac{F(|U^{n+1}|^2)-F(|U^n|^2)}{|U^{n+1}|^2-|U^n|^2}|U^{n+1/2}|^2,|U^{n+1/2}|^2\rangle_h \geq 0.
	\end{align*}
One then obtains
	\begin{align*}
		\Im(\langle U^{n+1},{U^n}|U^{n+1/2}|^2\rangle_h)\leq 
		\tau\lambda\langle \frac{F(|U^{n+1}|^2)-F(|U^n|^2)}{|U^{n+1}|^2-|U^n|^2}|U^{n+1/2}|^2,|U^{n+1/2}|^2\rangle_h.
	\end{align*}
	This completes the proof. %\qed
\end{proof}
\begin{lm}[{Boundedness of energy}]\label{lm:EUnbounded}
	Given $\{U^n\}_{n=0}^{N}$ be the solution in the scheme \eqref{CNFD}, then $E(U^{n+1})$ is bounded for all $n=0,1,\cdots,N-1$.
\end{lm}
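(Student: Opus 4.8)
The plan is to derive an exact one-step evolution identity for the discrete energy $E_h$ and then dominate its increment using Lemma~\ref{lm:est:Im_Un+1_Un_U_n2}. Concretely, I would pair the scheme \eqref{CNFD} with the forward time difference $\delta^+_t U^n=(U^{n+1}-U^n)/\tau$ in the interior inner product $\langle\cdot,\cdot\rangle_h$ and take real parts. The dispersive term contributes nothing, since $\langle i\delta^+_t U^n,\delta^+_t U^n\rangle_h=i\langle\delta^+_t U^n,\delta^+_t U^n\rangle_h$ is purely imaginary. For the Laplacian term I would invoke the summation-by-parts identities \eqref{discretegreen_x}--\eqref{discretegreen_y}, the commutation $\delta^+\delta^+_t=\delta^+_t\delta^+$, and the elementary fact $\Re(a+b,a-b)_h=\|a\|^2_{2,h}-\|b\|^2_{2,h}$, to obtain precisely $-\tfrac{1}{2\tau}\big(|U^{n+1}|^2_{1,h}-|U^n|^2_{1,h}\big)$.

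The two nonlinear terms are where the structure of $\psi$ and $\varphi$ is used. Writing $\psi(U^{n+1},U^n)\,\overline{\delta^+_t U^n}$ and taking the real part, the cancellation $\Re\big(U^n\overline{U^{n+1}}-U^{n+1}\overline{U^n}\big)=0$ collapses the factor $\tfrac{U^{n+1}+U^n}{2}(\overline{U^{n+1}}-\overline{U^n})$ to $\tfrac12(|U^{n+1}|^2-|U^n|^2)$, so the difference-quotient prefactor in $\psi$ telescopes into $\tfrac{\lambda}{2\tau}\big(\|F(|U^{n+1}|^2)\|_{1,h}-\|F(|U^n|^2)\|_{1,h}\big)$ (boundary nodes drop out since $U\in X_{JK}$ and $F(0)=0$). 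For the cubic-loss term one finds $\Re\big(i\varepsilon\langle\varphi,\delta^+_t U^n\rangle_h\big)=\tfrac{\varepsilon}{\tau}\Im\big(\langle U^{n+1},U^n|U^{n+1/2}|^2\rangle_h\big)$. Collecting these and recalling \eqref{df:E_h} yields the exact identity
\begin{align*}
E_h(U^{n+1})-E_h(U^n)=2\varepsilon\,\Im\big(\langle U^{n+1},U^n|U^{n+1/2}|^2\rangle_h\big).
\end{align*}

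With this identity, Lemma~\ref{lm:est:Im_Un+1_Un_U_n2} together with $\varepsilon\ge0$ gives
\begin{align*}
E_h(U^{n+1})-E_h(U^n)\le 2\varepsilon\tau\lambda\,\Big\langle \tfrac{F(|U^{n+1}|^2)-F(|U^n|^2)}{|U^{n+1}|^2-|U^n|^2}|U^{n+1/2}|^2,\,|U^{n+1/2}|^2\Big\rangle_h.
\end{align*}
If $\lambda\le0$ the right-hand side is nonpositive, so $E_h(U^{n+1})\le E_h(U^n)\le E_h(U^0)$ and boundedness is immediate; Lemma~\ref{lm:bound_gradient_byEnergy} and Lemma~\ref{lm:discreteConservativeMass} also bound $E_h$ below by $-|\lambda|\|U^0\|^2_{2,h}$. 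For $\lambda>0$ I would first use $0\le f<1$ to force the difference-quotient factor into $[0,1)$, so the bracket is at most $\|U^{n+1/2}\|^4_{4,h}$; then the discrete Gagliardo--Nirenberg inequality \eqref{ineq:poincare1} and the mass bound \eqref{discreteConservativeMass} give $\|U^{n+1/2}\|^4_{4,h}\le \|U^0\|^2_{2,h}\,|U^{n+1/2}|^2_{1,h}\le \tfrac12\|U^0\|^2_{2,h}\big(|U^{n+1}|^2_{1,h}+|U^n|^2_{1,h}\big)$.

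The main obstacle is closing the estimate in the focusing case $\lambda>0$: the bound just obtained reintroduces $|U^{n+1}|^2_{1,h}$ and $|U^n|^2_{1,h}$, i.e. the very gradient energy being controlled. I would break this circularity with Lemma~\ref{lm:bound_gradient_byEnergy}, substituting $|U^m|^2_{1,h}\le E_h(U^m)+|\lambda|\|U^0\|^2_{2,h}$, which turns the increment into $E_h(U^{n+1})-E_h(U^n)\le C\tau\big(E_h(U^{n+1})+E_h(U^n)\big)+C'\tau$ with $C,C'$ depending only on $\varepsilon,\lambda,\|U^0\|^2_{2,h}$. Under the mild step restriction $C\tau\le\tfrac12$ the implicit term is absorbed ($1-C\tau>0$), and a discrete Gr\"onwall iteration over $n\tau\le T$ yields $E_h(U^n)\le e^{CT}\big(E_h(U^0)+C'T\big)$, a bound independent of $n$ and, since $\|U^0\|_{2,h}$ and $E_h(U^0)$ remain bounded as $h\to0$, uniform in the mesh. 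This establishes that $E_h(U^{n+1})$ is bounded for all $n$.
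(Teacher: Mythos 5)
Your proposal is correct and follows essentially the same route as the paper: the same exact energy identity obtained by pairing the scheme with $U^{n+1}-U^n$ (equivalently $\delta^+_t U^n$), the same use of Lemma~\ref{lm:est:Im_Un+1_Un_U_n2} to control the loss term, and the same closing chain via $|f|\le 1$, inequality \eqref{ineq:poincare1}, Lemmas \ref{lm:discreteConservativeMass} and \ref{lm:bound_gradient_byEnergy}, and Gr\"onwall. Your explicit case split on the sign of $\lambda$ and the stated smallness condition $C\tau\le\tfrac12$ for absorbing the implicit term are only minor presentational refinements of the paper's argument.
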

\begin{proof}
	We note that
	\begin{align}\label{realpartdt}
		i\langle\delta_t^+U^n,U^{n+1}-U^n\rangle_h = i\frac{\|U^{n+1}-U^n\|^2_{2,h}}{\tau}.
	\end{align}
	Using (\ref{discretegreen_x}) and (\ref{discretegreen_y}), there holds 
	\begin{align}\nonumber
		\begin{split}
			\langle\delta^2_x U^{n+1/2},U^{n+1}-U^n\rangle_h + \langle\delta^2_y U^{n+1/2},U^{n+1}-U^n\rangle_h\\
			= -(\delta^+_x U^{n+1/2},\delta^+_x(U^{n+1}-U^n))_h -(\delta^+_y U^{n+1/2},\delta^+_y (U^{n+1}-U^n))_h.
		\end{split}
	\end{align}
 Then,
 %%%%%%%%%%%%%%%%%%%%%%%%%%%
 	\begin{align}
			\langle\delta^2_x U^{n+1/2},U^{n+1}-U^n\rangle_h + \langle\delta^2_y U^{n+1/2},U^{n+1}-U^n\rangle_h \nonumber\\
			= -\frac{1}{2}|U^{n+1}|^2_{1,h} + \frac{1}{2}|U^n|^2_{1,h} + \frac{1}{2}(\delta^+_x U^{n+1},\delta^+_xU^n)_h - \frac{1}{2}(\delta^+_x U^{n},\delta^+_xU^{n+1})_h \nonumber\\ + \frac{1}{2}(\delta^+_y U^{n+1},\delta^+_yU^n)_h - \frac{1}{2}(\delta^+_y U^{n},\delta^+_y U^{n+1})_h.\label{realpartdiffusion}
	\end{align}
 %%%%%%%%%%%%%%%%%%%%%%%%%%%%%
	For the discrete cubic loss, it yields 
	\begin{align}\label{loss_cubic}
		i\langle U^{n+1/2}|U^{n+1/2}|^2,U^{n+1}-U^n \rangle_h = \frac{i}{2}\langle |U^{n+1/2}|^2,|U^{n+1}|^2-|U^n|^2\rangle_h\nonumber\\+\Im(\langle U^{n+1},U^n|U^{n+1/2}|^2\rangle_h).
	\end{align}	
	Taking in (\ref{CNFD}) the inner product with $U^{n+1}-U^n$, using (\ref{realpartdt}), (\ref{realpartdiffusion}) and \eqref{loss_cubic}, and taking real parts, one gets
	\begin{align*}
		&|U^{n+1}|^2_{1,h}  -\lambda\|F (|U^{n+1}|^2)\|_{1,h}- 2\varepsilon\Im(\langle U^{n+1},U^n|U^{n+1/2}|^2\rangle_h)
		= |U^{n}|^2_{1,h}   -\lambda\|F(|U^{n}|^2)\|_{1,h}.
	\end{align*}
	Using \eqref{est:Im_Un+1_Un_U_n2} in Lemma \ref{lm:est:Im_Un+1_Un_U_n2}, one can arrive at
	\begin{align*}
		E_h(U^{n+1})\leq E_h(U^{n})
		+2\lambda\varepsilon\tau\langle \frac{F(|U^{n+1}|^2)-F(|U^n|^2)}{|U^{n+1}|^2-|U^n|^2}|U^{n+1/2}|^2,|U^{n+1/2}|^2\rangle_h.
	\end{align*}
	Using the property of the function $F$, one gets
	\begin{align*}
		E_h(U^{n+1}) -E_h(U^n) \leq 2|\lambda|\tau\varepsilon\|U^{n+1/2}\|^4_{4,h}\leq |\lambda|\varepsilon\tau (\|U^{n+1}\|^4_{4,h}+\|U^{n}\|^4_{4,h}).
	\end{align*} 
	Applying inequality \eqref{ineq:poincare1}, Lemma \ref{lm:discreteConservativeMass}, and Lemma \ref{lm:bound_gradient_byEnergy}, there holds
	\begin{align*}
		E_h(U^{n+1}) -E_h(U^n) &\leq |\lambda|\varepsilon\tau\|U^0\|^2_{2,h} \left[E(U^{n+1})+E(U^n)+2|\lambda|\|U^0\|^2_{2,h}\right].
	\end{align*} 
Thus, 
	\begin{align*}
		E_h(U^{n+1}) -E_h(U^n) &\leq 2|\lambda|^2\varepsilon\tau\|U^0\|^4_{2,h} +  |\lambda|\varepsilon\tau\|U^0\|^2_{2,h} \left[E(U^{n+1})+E(U^n)\right].
	\end{align*} 
	By using Gr\"onwall's inequality, one obtains
	\begin{align*}
		E_h(U^{n+1}) \lesssim \|U^0\|^4_{2,h} + E_h(U^0).
	\end{align*}
	This means that $E_h(U^{n+1})$ is bounded for all $n=0,1,\cdots,N-1$. %\qed
\end{proof}

\subsection{Existence and Uniqueness Solution}\label{subsect_exist_unique_sol}
In this section, we prove the existence and the uniqueness of the solution of the CNFD scheme. To show the existence of a solution, one can use the following well-known Brouwer-type fixed point theorem.
\begin{lm}\label{lm:existencesolution}
	Let $H(\cdot,\cdot)$ be a finite dimensional inner product space, $\|\cdot\|$ be the associated norm, and $g: H\rightarrow H$ be continuous. Assume moreover that
	$$
	\exists \alpha > 0 ~\forall z \in H ~ \|z\| = \alpha,~ \Re(g(z),z) \geq 0.
	$$
	Then, there exists a $z^*\in H$ such that $g(z^*) = 0$ and $\|z^*\|\leq \alpha$.
\end{lm}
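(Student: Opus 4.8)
The plan is to argue by contradiction using Brouwer's fixed point theorem, which is available here because $H$ is finite dimensional, so the closed ball $\bar{B}_\alpha = \{z\in H : \|z\|\leq \alpha\}$ is compact and convex. Suppose, contrary to the claim, that $g$ has no zero in $\bar{B}_\alpha$, i.e. $g(z)\neq 0$ for every $z$ with $\|z\|\leq\alpha$. Then $\|g(z)\|>0$ throughout $\bar{B}_\alpha$, and I can define the map
$$
h(z) = -\alpha\,\frac{g(z)}{\|g(z)\|}.
$$

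First I would verify that $h$ is a continuous self-map of $\bar{B}_\alpha$. Continuity follows from the continuity of $g$ together with the fact that, under the contradiction hypothesis, $\|g(z)\|$ is strictly positive and (being continuous on the compact ball) bounded away from $0$, so the normalization $g(z)/\|g(z)\|$ is continuous on $\bar{B}_\alpha$. Moreover $\|h(z)\| = \alpha$ for every $z\in\bar{B}_\alpha$, so $h(\bar{B}_\alpha)$ lies on the sphere of radius $\alpha$ and in particular inside $\bar{B}_\alpha$. Brouwer's theorem then furnishes a point $z^*\in\bar{B}_\alpha$ with $h(z^*) = z^*$.

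Next I would extract the contradiction from this fixed point. Taking norms in $z^* = h(z^*)$ gives $\|z^*\| = \alpha$, so $z^*$ sits on the sphere where the hypothesis $\Re(g(z^*),z^*)\geq 0$ applies. On the other hand, substituting $z^* = -\alpha\, g(z^*)/\|g(z^*)\|$ into the inner product yields
$$
(g(z^*),z^*) = -\frac{\alpha}{\|g(z^*)\|}\,(g(z^*),g(z^*)) = -\alpha\|g(z^*)\|,
$$
whose real part is $-\alpha\|g(z^*)\| < 0$, since $g(z^*)\neq 0$. This violates $\Re(g(z^*),z^*)\geq 0$, so the assumption that $g$ omits $0$ on $\bar{B}_\alpha$ is untenable; hence there exists $z^*\in\bar{B}_\alpha$ with $g(z^*)=0$ and $\|z^*\|\leq\alpha$, as required.

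The argument is essentially routine once the auxiliary map $h$ is written down, and I do not anticipate a serious obstacle. The two points demanding care are, first, the well-definedness and continuity of $h$, which rest entirely on the contradiction hypothesis guaranteeing that $g$ never vanishes on the compact ball; and second, correctly tracking the sign in the final inner-product computation, which is precisely the step that converts the Brouwer fixed point into a violation of the coercivity-type condition $\Re(g(z),z)\geq 0$.
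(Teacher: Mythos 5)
Your argument is correct, and it is the standard proof of this classical result (it appears, e.g., in Akrivis--Dougalis--Karakashian and in Temam's book). Note that the paper itself offers no proof to compare against: it states the lemma as a ``well-known Brouwer-type fixed point theorem'' and uses it as a black box, so your write-up supplies exactly the missing argument. The only point worth double-checking is the sign computation at the end: with the paper's convention that the inner product is conjugate-linear in the second slot, the scalar $-\alpha/\|g(z^*)\|$ is real, so $(g(z^*),z^*)=-\alpha\|g(z^*)\|<0$ as you claim, and the contradiction with $\Re(g(z^*),z^*)\geq 0$ is genuine.
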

{\it Proof of Theorem \ref{thm:existsol_uniqueSolution_CNFD}}. 
From (\ref{CNFD}), one can get for all $(j,k)\in\T_{JK}$ that
\begin{align*}
U^{n+1/2}_{j,k} = U^n_{j,k} +\frac{i\tau}{2}\delta^2 U^{n+1/2}_{j,k} + \frac{i\tau\lambda}{2}\frac{F(|2U^{n+1/2}_{j,k}-U^n_{j,k}|^2)-F(|U^n_{j,k}|^2)}{|2U^{n+1/2}_{j,k}-U^n_{j,k}|^2-|U^n_{j,k}|^2}U^{n+1/2}_{j,k}\\
-\frac{\tau\varepsilon}{2} |U^{n+1/2}_{j,k}|^2 U^{n+1/2}_{j,k}.
\end{align*}
Note that the mapping $\Pi: X_{JK}\rightarrow X_{JK}$ by
\begin{align*}
\begin{split}
(\Pi(v))_{j,k} = v_{j,k}-U^n_{j,k}  -  \frac{i\tau}{2}\delta^2 v_{j,k} - \frac{i\tau\lambda}{2}\frac{F(|2v_{j,k}-U^n_{j,k}|^2)-F(|U^n_{j,k}|^2)}{|v_{j,k}-U^n_{j,k}|^2-|U^n_{j,k}|^2}v_{j,k} + \frac{\tau\varepsilon}{2} |v_{j,k}|^2  v_{j,k}
\end{split}
\end{align*}
is continuous. We then have $\Re\langle\Pi(v),v\rangle_h \geq \|v\|^2_{2,h}-\Re(\langle U^n,v\rangle_h)$, i.e 
\begin{align*}
\Re\langle\Pi(v),v\rangle_h \geq \|v\|_{2,h}(\|v\|_{2,h}-\|U^n\|_{2,h}).
\end{align*}
Hence, for $\|v\|_{2,h} = \|U^n\|_{2,h}+1$, it yields $\Re\langle\Pi(v),v\rangle_h > 0$. Thus, the existence of $U^{n+1/2}$ follows from Lemma \ref{lm:existencesolution}.\\
Now we will prove the unique solution. Let $v,w\in X_{JK}$ be the solutions in the scheme \eqref{CNFD} such that $\Pi(v) = \Pi(w) = 0$. Setting $\chi = v-w$, it yields
	\begin{align}\label{unique_CNFD}
		\chi_{j,k} =\frac{i\tau}{2}\delta^2 \chi_{j,k} + \frac{i\lambda\tau}{2}\psi_1(v_{j,k},w_{j,k}) + \frac{\tau\varepsilon}{2}\psi_2(v_{j,k},w_{j,k}),
	\end{align}
	where 
 $$\psi_1(v_{j,k},w_{j,k}) = \frac{F(|2v_{j,k}-U^n_{j,k}|^2)-F(|U^n_{j,k}|^2)}{|2v_{j,k}-U^n_{j,k}|^2-|U^n_{j,k}|^2}v_{j,k}-\frac{F(|2w_{j,k}-U^n_{j,k}|^2)-F(|U^n_{j,k}|^2)}{|2w_{j,k}-U^n_{j,k}|^2-|U^n_{j,k}|^2}w_{j,k}$$
 and 
$$\psi_2(v_{j,k},w_{j,k})=        |v_{j,k}|^2v_{j,k}-|w_{j,k}|^2w_{j,k}.$$
Setting $v_1 = |2v_{j,k}-U^n_{j,k}|^2-|U^{n}_{j,k}|^2$ and $w_1 = |2w_{j,k}-U^n_{j,k}|^2-|U^{n}_{j,k}|^2$,
one obtains 
	\begin{align*}
		\psi_1(v_{j,k},w_{j,k})& = v_{j,k}\int_0^1f(|U^n_{j,k}|^2+tv_1)dt - w_{j,k}\int_0^1f(|U^n_{j,k}|^2+tw_1)dt\\
&=v_{j,k}\bigg[\int_0^1f(|U^n_{j,k}|^2+tv_1)dt -\int_0^1f(|U^n_{j,k}|^2+tw_1)dt\bigg]\\
		&+ (v_{j,k}-w_{j,k})\int_0^1f(|U^n_{j,k}|^2+tw_1)dt.
	\end{align*}
 Then,
 	\begin{align*}
		\psi_1(v_{j,k},w_{j,k}) 
		=v_{j,k}\int_0^1t(v_1-w_1)\int_0^1f'(|U^n_{j,k}|^2+tw_1+ts(v_1-w_1))dsdt\\
		+\chi_{j,k}\int_0^1f(|U^n_{j,k}|^2+tw_1)dt.
	\end{align*}
Since $|f(s)|\leq 1$ and $|f'(s)|\leq 1$ for all $s\geq 0$, then
	\begin{align*}
		|\psi_1(v_{j,k},w_{j,k})|&\leq \frac{1}{2}|v_{j,k}||v_1-w_1| +|\chi_{j,k}|
		\leq 4|v_{j,k}|(|v_{j,k}|+|w_{j,k}|+|U^n_{j,k}|)|\chi_{j,k}|+|\chi_{j,k}|\\
		&\leq 2(4|v_{j,k}|^2+|w_{j,k}|^2+|U^n_{j,k}|^2)|\chi_{j,k}|+|\chi_{j,k}|
	\end{align*}
and
	\begin{align*}
		|\psi_2(v_{j,k},w_{j,k})|&\leq |v_{j,k}|^2|\chi_{j,k}|+||v_{j,k}|^2-|w_{j,k}|^2||w_{j,k}|
		\leq |v_{j,k}|^2|\chi_{j,k}| +(|v_{j,k}|+|w_{j,k}|)|\chi_{j,k}||w_{j,k}|\\
		&\leq |\chi_{j,k}|(|v_{j,k}|^2+|v_{j,k}||w_{j,k}| + |w_{j,k}|^2)
		\leq \frac{3}{2}(|v_{j,k}|^2+ |w_{j,k}|^2)|\chi_{j,k}|.
	\end{align*}
 Taking in \eqref{unique_CNFD} the inner product with $\chi$, taking the real and imaginary parts, respectively, and then using Holder’s inequality in the right-hand sides of the resulting identities, one can get
	\begin{align}
		\|\chi\|_{2,h}^2 \leq \tau|\lambda|(4\|v\|^2_{4,h} +\|w\|^2_{4,h} + \|U^n\|^2_{4,h})\|\chi\|^2_{4,h} +\frac{|\lambda|\tau}{2}\|\chi\|^2_{2,h} \nonumber\\+\frac{3\varepsilon\tau}{4}(\|v\|^2_{4,h}+\|w\|^2_{4,h}) \|\chi\|^2_{4,h}\label{ineq:est_norml2chi}
	\end{align}
and 
	\begin{align}
		|\chi|_{1,h}^2 &\leq |\lambda|(4\|v\|^2_{4,h} +\|w\|^2_{4,h} + \|U^n\|^2_{4,h})\|\chi\|^2_{4,h} + \frac{3\varepsilon}{2}(\|v\|^2_{4,h}+\|w\|^2_{4,h}) \|\chi\|^2_{4,h}.\label{ineq:est_normh1chi}
	\end{align}
From Lemma \ref{lm:EUnbounded}, Lemma \ref{lm:discreteConservativeMass}, and Lemma \ref{lm:bound_gradient_byEnergy}, for $m\in \overline{0,N}$, there exists a constant $c>0$ such that 
	\begin{align*}
		|U^m|^2_{1,h}\leq E_h(U^m)+|\lambda|\|U^m\|^2_{2,h} \leq  cE_h(U^0) + c\|U^0\|^4_{2,h} + |\lambda||\|U^0\|^2_{2,h}.
	\end{align*}
	Applying  \eqref{ineq:poincare1}, \eqref{discreteConservativeMass}, and previous inequality, there holds
	\begin{align*}
		\|U^m\|^4_{4,h}\leq |U^m|^2_{1,h}\|U^m\|^2_{2,h}\leq (cE_h(U^0) + c\|U^0\|^4_{2,h} + |\lambda||\|U^0\|^2_{2,h})\|U^0\|^2_{2,h}.
	\end{align*}
	From the previous inequality, it implies that $\|v\|^4_{4,h}$, $\|w\|^4_{4,h}$, and $\|U^n\|^4_{4,h}$ are bounded.  
	Then, the inequalities (\ref{ineq:est_norml2chi}) and (\ref{ineq:est_normh1chi}) become, respectively, as
	\begin{align}
		&\|\chi\|_{2,h}^2 \leq c_1\tau\|\chi\|^2_{4,h}\label{ineq:est_norml2chi1}
	\end{align}
and 
	\begin{align}
		&|\chi|_{1,h}^2 \leq c_2\|\chi\|^2_{4,h}.\label{ineq:est_normh1chi2}
	\end{align}
Using (\ref{ineq:est_norml2chi1}) and (\ref{ineq:est_normh1chi2}), it arrives at 
	\begin{align*}
		\|\chi\|^4_{4,h}\leq c_1c_2\tau\|\chi\|^4_{4,h}.
	\end{align*}
Thus, for $\tau$ is small enough, the proof of the unique solution is completed. \qed
\subsection{Convergence}\label{subsect_convergence}
Setting $M:=\max\{|u(\xx,t)|:(\xx,t)\in\Omega\times(0,T)\}+1$, we define the auxiliary functions $\tilde{\varphi}:\mathbb{C}\times\mathbb{C}\rightarrow \mathbb{C}$
\begin{align*}
	\tilde{\varphi}(z,w):=\left\{\begin{array}{cc}
		\varphi(z,w) & \text{if }|z+w| \leq 2M,\\
		M^2\frac{(z+w)}{2} & \text{if }|z+w| > 2M.
	\end{array}
	\right.
\end{align*}
and  $\tilde{\psi}:\mathbb{C}\times\mathbb{C}\rightarrow \mathbb{C}$
\begin{align*}
	\tilde{\psi}(z,w):=\left\{\begin{array}{cc}
		\psi(z,w) & \text{if }|z|,|w| \leq M,\\
		\frac{F(M^2)-F(|w|^2)}{M^2-|w|^2}\frac{(z+w)}{2} & \text{if }|z|> M,|w| \leq M,\\
		\frac{F(|z|^2)-F(M^2)}{|z|^2-M^2}\frac{(z+w)}{2} & \text{if }|z|\leq M,|w| > M,\\
		f(M^2)\frac{(z+w)}{2} & \text{if }|z|> M,|w| >  M.
	\end{array}
	\right.
\end{align*}
We note that the functions $\tilde{\varphi}$ and $\tilde{\psi}$ are globally Lipschitz continuous. Let $V^0:=U^0$ and $V^n\in X_{JK}$, $n=1,\cdots,N$ satisfy
\begin{align}\label{soltuonV_globallylipschtiz}
	\left\{
	\begin{array}{c}
		i\delta_t^+V^n_{j,k}+\delta^2 V_{j,k}^{n+1/2} +\lambda\tilde{\psi}(V^{n+1}_{j,k},V^n_{j,k}) +i\varepsilon \tilde{\varphi}(V^{n+1}_{j,k},V^n_{j,k})=0, ~~\forall (j,k) \in \T_{JK},\\
		V^0_{j,k} = u_0(x_j,y_k), ~\forall (j,k) \in \T^0_{JK}, V^{n+1}\in X_{JK}.
	\end{array}
	\right.
\end{align}
From the definition of the discrete solution $\{V^n\}_{n=0}^n$, one can get the following lemma.
\begin{lm}
	Let the solution of (\ref{eq:NLS}) be smooth enough and $\{V^n\}_{n=0}^N$ satisfy (\ref{soltuonV_globallylipschtiz}). For $\tau\lesssim h $, then
	\begin{align}\label{ineq:errorVandu}
		\max_{n\in [1,N]}\|u^n-V^n\|_{2,h} \leq c(\tau^2+h^2),
	\end{align}
where the constant $c$ is independent of $h$ and $\tau$.
\end{lm}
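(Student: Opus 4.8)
The plan is to run a discrete energy estimate on the error $e^n := u^n-V^n$, where $u^n=(u(x_j,y_k,t_n))_{(j,k)}$ is the grid restriction of the exact solution, exploiting that the cut-off nonlinearities $\tilde{\psi},\tilde{\varphi}$ are globally Lipschitz. First I would record the consistency of the scheme: substituting the smooth exact solution into the left-hand side of (\ref{soltuonV_globallylipschtiz}) produces a residual $R^n_{j,k}$. Because $|u|\le M-1<M$ on $\bar\Omega\times[0,T]$, the truncations are inactive along the exact solution, so $\tilde{\psi}(u^{n+1},u^n)=\psi(u^{n+1},u^n)$ and $\tilde{\varphi}(u^{n+1},u^n)=\varphi(u^{n+1},u^n)$, and the Taylor expansion of the Crank--Nicolson operators about $t_{n+1/2}$ (carried out in Appendix \ref{Appendix}) gives $\|R^n\|_{2,h}\le c(\tau^2+h^2)$ uniformly in $n$. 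Subtracting (\ref{soltuonV_globallylipschtiz}) from this consistency identity yields the error equation
\begin{align*}
 i\delta_t^+ e^n_{j,k} + \delta^2 e^{n+1/2}_{j,k} + \lambda\big[\tilde{\psi}(u^{n+1}_{j,k},u^n_{j,k})-\tilde{\psi}(V^{n+1}_{j,k},V^n_{j,k})\big] + i\varepsilon\big[\tilde{\varphi}(u^{n+1}_{j,k},u^n_{j,k})-\tilde{\varphi}(V^{n+1}_{j,k},V^n_{j,k})\big] = R^n_{j,k},
\end{align*}
with $e^0=0$ since $V^0_{j,k}=u_0(x_j,y_k)=u^0_{j,k}$.

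Next I would take the inner product of this identity with $e^{n+1/2}$ and extract the imaginary part, mirroring the manipulations already used in Lemma \ref{lm:discreteConservativeMass}. The temporal term reproduces $\Im(i\langle\delta_t^+e^n,e^{n+1/2}\rangle_h)=(\|e^{n+1}\|^2_{2,h}-\|e^n\|^2_{2,h})/(2\tau)$ exactly as in (\ref{impartdt}), while the Laplacian term is real by (\ref{discretegreen_x})--(\ref{discretegreen_y}) and hence drops out of the imaginary part. The two nonlinear differences are controlled by the global Lipschitz constant $L$ of $\tilde{\psi}$ and $\tilde{\varphi}$, giving $|\tilde{\psi}(u^{n+1},u^n)-\tilde{\psi}(V^{n+1},V^n)|\le L(|e^{n+1}|+|e^n|)$ and likewise for $\tilde{\varphi}$, and the residual is handled by the Cauchy--Schwarz inequality. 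After using $\|e^{n+1/2}\|_{2,h}\le\tfrac12(\|e^{n+1}\|_{2,h}+\|e^n\|_{2,h})$ and Young's inequality I expect an estimate of the form
\begin{align*}
 \|e^{n+1}\|^2_{2,h}-\|e^n\|^2_{2,h} \le c\tau\big(\|e^{n+1}\|^2_{2,h}+\|e^n\|^2_{2,h}\big)+c\tau\,\|R^n\|^2_{2,h}.
\end{align*}

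Summing over $n$ and invoking the discrete Gr\"onwall inequality (legitimate once $\tau$ is small enough that $c\tau<1$), together with $e^0=0$ and $\max_n\|R^n\|_{2,h}\le c(\tau^2+h^2)$, then delivers $\max_n\|e^n\|^2_{2,h}\lesssim Te^{cT}(\tau^2+h^2)^2$, which is (\ref{ineq:errorVandu}). The decisive ingredient --- and the very reason for introducing the cut-off scheme (\ref{soltuonV_globallylipschtiz}) --- is the \emph{global} Lipschitz bound on $\tilde{\psi},\tilde{\varphi}$: for the genuine nonlinearities $\psi,\varphi$ the Lipschitz constant would grow with $\|V^n\|_{\infty,h}$, which is not yet under control, so the Gr\"onwall argument would not close and the estimate would be circular. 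Thus the main obstacle is not the energy estimate itself but the nonlinearity, and it is defused by the truncation; the hypothesis $\tau\lesssim h$ plays no essential role in this $L^2$ bound but becomes crucial only in the subsequent step, where an inverse inequality applied to the bound just obtained shows that $V^n$ stays within the region $|V^n|<M$ and hence coincides with $U^n$ for $h$ small.
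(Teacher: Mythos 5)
Your proposal is correct and follows essentially the same route as the paper: define the consistency error of the truncated scheme, subtract to obtain the error equation for $e^n=u^n-V^n$, take the inner product with $e^{n+1/2}$ and the imaginary part so that the Laplacian drops out, control the nonlinear differences by the global Lipschitz property of $\tilde\psi,\tilde\varphi$ and the residual by Lemma~\ref{lm:est_rn}, and close with the discrete Gr\"onwall inequality. The only cosmetic difference is that the paper runs Gr\"onwall on the unsquared norms via $(1-c\tau)\|e^{n+1}\|_{2,h}\leq(1+c\tau)\|e^n\|_{2,h}+C\tau(\tau^2+h^2)$ rather than on the squared quantities after Young's inequality, which changes nothing.
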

\begin{proof}
	Let $r^n\in X_{JK}$ be the consistency error of the method (\ref{soltuonV_globallylipschtiz}) or (\ref{CNFD}), i.e, with $\displaystyle u^{n+1/2}=\frac{u^{n+1}+u^n}{2}$, we define
	\begin{align}
		r^n_{j,k} = i\delta_t^+u^n_{j,k} + \delta^2 u_{j,k}^{n+1/2} + \lambda\tilde{\psi}(u^{n+1}_{j,k},u^n_{j,k}) +i\varepsilon\tilde{\varphi}(u^{n+1}_{j,k},u^n_{j,k}),~~ (j,k)\in\T_{JK}.\label{df:rn}
	\end{align}
	Let $e^n:=u^n-V^n\in X_{JK}$ for all $n = 0,1,\cdots,N$. Subtracting the first equality in \eqref{soltuonV_globallylipschtiz} from the equality \eqref{df:rn}, it implies
	\begin{align}\label{truncationerrorCNFD}
		i\delta_t^+e^n_{j,k} + \delta^2 e_{j,k}^{n+\frac{1}{2}} + \lambda\left(\tilde{\psi}(u^{n+1}_{j,k},u^n_{j,k}) -\tilde{\psi}(V^{n+1}_{j,k},V^n_{j,k}) \right)\nonumber\\ +i\varepsilon \left(\tilde{\varphi}(u^{n+1}_{j,k},u^n_{j,k}) -\tilde{\varphi}(V^{n+1}_{j,k},V^n_{j,k})\right) - r_{j,k}^n  = 0.
	\end{align}
We take the inner product with $e^{n+1/2}$ and take imaginary parts. By applying the Cauchy–Schwarz inequality and using the globally Lipschitz of
$\tilde\varphi$ and $\tilde\psi$, 
it yields  
        \begin{align*}
  \|e^{n+1}\|^2_{2,h} - \|e^{n}\|^2_{2,h} \leq C\tau(\|e^{n+1}\|_{2,h}  + \|e^{n}\|_{2,h} + \|r^n\|_{2,h})\|e^{n+1/2}\|_{2,h}.
	\end{align*}
 From the definition of $r^n_{j,k}$
 and Lemma \ref{lm:est_rn}, one has
	\begin{align*}
		(1-c\tau)\|e^{n+1}\|_{2,h}\leq (1+c\tau)\|e^{n}\|_{2,h} + C\tau(\tau^2+h^2).
	\end{align*}
	The result follows in the view of Gr\"onwall's discrete theorem. %\qed
\end{proof}
%The main results in this section are given in  Theorems \ref{thm:err_uandU_L2norm} and \ref{thm:err_uandU_H01norm}.
%\begin{thm}\label{thm:err_uandU_L2norm}
%	Let the solution of (\ref{eq:NLS}) be smooth enough and $U^0,U^1,\cdots,U^N$ satisfy (\ref{CNFD}). For $h $ small enough and $\tau \lesssim h$, then  
%	\begin{align}\label{ineq:errorUandu_L2}
%		\max_{n\in [1,N]}\|u^n-U^n\|_{2,h} \leq c(\tau^2+h^2).
%	\end{align}
%\end{thm}
{\it Proof of Theorem \ref{thm:err_uandU_L2H01norm}}. We use the obvious inequality
	\begin{align*}
		\|\omega\|_{\infty,h} \leq h\|\omega\|_{2,h},
	\end{align*}
	for $\omega\in X_{JK}$. From (\ref{ineq:errorVandu}), it yields
	\begin{align*}
		\max_{1\leq n\leq N}\max_{(j,k)\in \T_{JK}} |u^n_{j,k}- V_{j,k}^n| \leq c(\frac{\tau^2}{h}+h),
	\end{align*}
	i.e, for $\tau\lesssim h$ and $h$ sufficiently small (bounded by a constant of $\frac{M}{2c}$), it implies $V_{j,k}^n\leq M$, $n=1,\cdots,N$, $(j,k)\in \T_{JK}$. Therefore, $V^n$ satisfies (\ref{CNFD}), i.e, for $h$ small enough and from the unique solution in Theorem \ref{thm:existsol_uniqueSolution_CNFD}, we have $V^n=U^n$. From this result and from (\ref{ineq:errorVandu}), the proof of \eqref{ineq:errorUandu_L2} is completed.\\
	Now, we will estimate the error on norm $|\cdot|_{1,h}$. One can write  (\ref{truncationerrorCNFD}) as  follows:
	\begin{align}\label{truncationerrorCNFD1}
		i\delta_t^+e^n_{j,k} +  \chi_{j,k}^{n+1/2} + \eta_{j,k}^n  + \xi_{j,k}^n - r_{j,k}^n  = 0,
	\end{align}
	where $\chi^n,\xi^n,\eta^n\in X_{JK}$ and 
	\begin{align*}
	    \chi^n_{j,k} &= \delta^2 e_{j,k}^{n+1/2},\\
		\eta_{j,k}^n &=\lambda\left({\psi}(u^{n+1}_{j,k},u^n_{j,k}) - {\psi}(U^{n+1}_{j,k},U^n_{j,k}) \right),\\
		\xi_{j,k}^n & = i\varepsilon\left({\varphi}(u^{n+1}_{j,k},u^n_{j,k}) -{\varphi}(U^{n+1}_{j,k},U^n_{j,k})\right).
	\end{align*}
	From the definition of $\psi$ and $\varphi$, there holds
%%%%%%%%%%%%%%%%%%%%
	\begin{align*}
		\eta_{j,k}^n &= \lambda u^{n+1/2}_{j,k}\int_0^1f\left(|u^n_{j,k}|^2 + t(|u^{n+1}_{j,k}|^2-|u^n_{j,k}|^2)\right)dt\\
        &-\lambda U^{n+1/2}_{j,k}\int_0^1f\left(|U^n_{j,k}|^2 + t(|U^{n+1}_{j,k}|^2-|U^n_{j,k}|^2)\right)dt\\
		&= \lambda U^{n+1/2}_{j,k}\int_0^1\left[f\left(|u^n_{j,k}|^2 + t(|u^{n+1}_{j,k}|^2-|u^n_{j,k}|^2)\right) - f\left(|U^n_{j,k}|^2 + t(|U^{n+1}_{j,k}|^2-|U^n_{j,k}|^2)\right)\right]dt\\ &+\lambda e^{n+1/2}_{j,k}\int_0^1f\left(|u^n_{j,k}|^2 + t(|u^{n+1}_{j,k}|^2-|u^n_{j,k}|^2)\right)dt. 
	\end{align*}
Thus,
	\begin{eqnarray*}
		\eta_{j,k}^n&& = \lambda U^{n+1/2}_{j,k}\int_0^1\left[H^n_{j,k}+t(H^{n+1}_{j,k}-H^n_{j,k})\right]\int_0^1f'(g_1(t,s))dsdt
\nonumber \\&&	 +\lambda e^{n+1/2}_{j,k}\int_0^1f(|u^n_{j,k}|^2 + t(|u^{n+1}_{j,k}|^2-|u^n|^2))dt,
	\end{eqnarray*}
%%%%%%%%%%%%%%%%%%%% 
	where $H^n_{j,k}=|u^{n}_{j,k}|^2-|U^n_{j,k}|^2 = e^n_{j,k}\overline{u^n_{j,k}}+U^n_{j,k}\overline{e_j^n}$ and 
 $$
    g_1(t,s) =|U^n_{j,k}|^2 + t(|U^{n+1}_{j,k}|^2-|U^n_{j,k}|^2) +s(H^n_{j,k}+t(H^{n+1}_{j,k}-H^n_{j,k})).
 $$
 There also holds
	\begin{align*}
		\xi^n_{j,k} = i\varepsilon\left[e^n_{j,k}\overline{u^{n+1/2}_{j,k}}+U^{n+1/2}_{j,k}\overline{e_j^{n+1/2}}\right]U^{n+1/2}_{j,k} + i\varepsilon |u^{n+1/2}_{j,k}|^2e^{n+1/2}_{j,k}.
	\end{align*}
	Since $U^{n}, U^{n+1}, u^{n}, u^{n+1}, f, f',f'', |U^{n+1}|_{1,h}, |U^{n}|_{1,h}, |u^{n+1}|_{1,h}, |u^{n}|_{1,h}$ are bounded, one can obtain 
	\begin{align}
		\|\eta^n\|_{2,h} \lesssim \|e^{n+1}\|_{2,h} + \|e^{n}\|_{2,h},\;\;
		\|\delta^+\eta^n\|_{2,h} \lesssim |e^{n+1}|_{1,h} + |e^{n}|_{1,h}\label{estL2H01:eta}
	\end{align}
and
	\begin{align}
			\|\xi^n\|_{2,h} \lesssim \|e^{n+1}\|_{2,h} + \|e^{n}\|_{2,h},\;\;
		\|\delta^+\xi^n\|_{2,h} \lesssim |e^{n+1}|_{1,h} + |e^{n}|_{1,h}.\label{estL2H01:xi}
	\end{align}
 Taking in (\ref{truncationerrorCNFD1}) the inner product with $e^{n+1}-e^{n}$ and taking the real part, there holds
	\begin{align*}
		\E_h(e^{n+1})-\E_h(e^n) &= 2\Re\langle\eta^n+\xi^n - r^n,e^{n+1}-e^n\rangle_h\\
		&=2\Re\langle \eta^n+\xi^n - r^n, i\tau(\chi^n+\eta^n+\xi^n - r^n) \rangle_h\\
		&= -2\tau\Im\langle \eta^n + \xi^n - r^n, \chi^n\rangle_h.
	\end{align*}
	By using the definition of $\chi^n_{j,k}$, it yields
	\begin{align*}
		|\langle \eta^n+\xi^n, \chi^n\rangle_h| &= |\langle \eta^n+\xi^n,\delta^2 e^{n+1/2}\rangle_h|
		= |(\delta^+\eta^n+\delta^+\xi^n ,\delta^+e^{n+1/2})_h|.
	\end{align*}
 Thus,
 	\begin{align*}
		|\langle \eta^n+\xi^n, \chi^n\rangle_h|  \leq  \frac{1}{2}|\eta^n|_{1,h}(|e^{n+1}|_{1,h}+|e^{n}|_{1,h})+ \frac{1}{2}|\xi^n|_{1,h}(|e^{n+1}|_{1,h}+|e^{n}|_{1,h}).
	\end{align*}
	Using the estimation of $|\eta^n|_{1,h}$ and $|\xi^n|_{1,h}$ in \eqref{estL2H01:eta}-\eqref{estL2H01:xi}, the previous inequality implies 
	\begin{align}
		|\langle \eta^n+\xi^n, \chi^n\rangle_h| \lesssim |e^{n}|^2_{1,h} + |e^{n+1}|^2_{1,h}. \label{est:etaxichi}
	\end{align}
By using the similar calculations, one then obtains
	\begin{align*}
		|\langle r^n,\chi^n\rangle_h|& = |\langle r^n,\delta^2 e^{n+1/2}\rangle_h| = |\langle \delta^+ r^n, \delta^+ e^{n+1/2}  \rangle_h| \leq  \frac{1}{2}|r^n|_{1,h}(|e^{n}|_{1,h} + |e^{n+1}|_{1,h}).
	\end{align*}
	From the definition of $r^n$, Lemma \ref{lm:est_rn}, and Lemma \ref{lm:est_H01rn}, there holds
%	\begin{align*}
%		\|r^n\|_{2,h}^2 &\lesssim (\tau^2+h^2)^2\\
%		|r^n|_{1,h}^2 &\lesssim (\tau^2+h^2)^2
%	\end{align*}
%	Then
	\begin{align}\label{est:rchi}
		|\langle r^n,\chi^n\rangle_h| \lesssim (\tau^2+h^2)^2 + |e^{n+1}|^2_{1,h} +  |e^{n}|^2_{1,h}.
	\end{align}
	From (\ref{est:etaxichi}) and (\ref{est:rchi}), it arrives at
	\begin{align*}
		\E_h(e^{n+1})-\E_h(e^n) \lesssim \tau(\tau^2+h^2)^2 + \tau(|e^{n+1}|^2_{1,h} +  |e^{n}|^2_{1,h} ).
	\end{align*}
	Therefore,
	\begin{align*}
		\E_h(e^{n+1})-\E_h(e^n) \lesssim \tau(\tau^2+h^2)^2 + \tau(\E_h(e^{n+1})+\E_h(e^n)).
	\end{align*}
	Using Gr\"onwall's inequality, it holds
	\begin{align*}
		\E_h(e^n) \lesssim (\tau^2+h^2)^2.
	\end{align*}
	From the definition of $\E_h(e^n)$ in \eqref{df:E_h1}, one obtains \eqref{ineq:errorUandu_H1}.
%	\begin{align*}
%		|e^n|^2_{1,h} \lesssim (\tau^2+h^2)^2
%	\end{align*}
\qed

\section{Numerical experiments}
\label{Num}
In this section, we validate the numerical CNFD scheme of (\ref{CNFD}) by the numerical experiments of (\ref{eq:NLS}). 
We consider the initial soliton condition of (\ref{eq:NLS}) in the form \cite{NH2021,Yang2010}
\begin{align}
	u_0(x,y) =A_0 v_0(X_0,Y_0)\exp\left[i\alpha_0 + i\chi_0({X_0},{Y_0})\right], 
	\label{IC}
\end{align}
where 
$A_0$ is the initial amplitude parameter,
$(x_0,y_0)$ is the initial position,
${\bf d} = (d_1,d_2)$ is the velocity vector of the soliton with the velocity components in the $x$ and $y$ directions as $d_1$ and $d_2$, respectively, $X_0 = x-x_0$, $Y_0=y-y_0$,
$\chi_0({X_0},{Y_0}) = \tilde{{d}_{1}}X_0+\tilde{{d}_2}Y_0$ with
$\tilde{{d}_{1}} = d_1/2$ and $\tilde{{d}_{2}} = d_2/2$,
$\alpha_0$ is the initial phase, and
$v_0(X_0,Y_0)$ is the localized real-valued amplitude function.
Letting $A_0=1$ for simplicity, it can be shown that $v_0(X_0,Y_0)$ satisfies the following elliptic equation \cite{NH2021,Yang2010}:
\begin{align}
	\Delta v_0 + \frac{v_0^3}{1+v_0^2} = \mu v_0,
	\label{elliptic_eq1}
\end{align}
where $\mu$ is the propagation constant.
We note that the soliton solution of (\ref{eq:NLS}) for $t>0$ was theoretically explored in Ref. \cite{NH2021}. More specifically, an approximate soliton solution of  (\ref{eq:NLS}) can be found in the form of
\begin{align}
	u(x,y,t) =A(t)v(X,Y)\exp\left[i\alpha + i\tilde\chi(\tilde X,\tilde Y)\right], 
\label{single_soliton1}
\end{align}
where $t>0$,
 $X = x-x_0 - d_1 t$,
 $Y = y-y_0 - d_2 t$,
 $\tilde X = x - x_0 - \tilde{d}_{1}t$,
  $\tilde Y = y - y_0 - \tilde{d}_{2}t$,
$\tilde\chi({\tilde X},{\tilde Y}) = \tilde{{d}_{1}}\tilde X  + \tilde{d}_{2}\tilde Y $, $\alpha$ is related to the phase, 
$v(X,Y)$ is the solution of the following elliptic equation:
\begin{align}
	\Delta v + \frac{v^3}{1+v^2} = \mu v,
	\label{elliptic_eq2}
\end{align}
and the amplitude parameter $A(t)$ satisfies \cite{NH2021}:
\begin{align}
	A(t) = A_0\left[1 + 2\epsilon \|u_0\|^{4}_{4}\|u_0\|^{-2}_{2} A_0^4 t \right]^{-1/2}.
\label{amplitude}
\end{align}
In practice, one can numerically solve 
equations (\ref{elliptic_eq1}) and (\ref{elliptic_eq2}) by a Fourier iteration method such as the accelerated imaginary-time evolution method (AITEM) to define the ground states $v_{0}$ and $v$ \cite{NH2021,Yang2010}, respectively.

In the current work, we compare the soliton profiles and their amplitudes obtained by the CNFD scheme of (\ref{CNFD}) to the ones obtained by the theoretical calculations from equations (\ref{single_soliton1}) and (\ref{amplitude}), respectively, and to the ones obtained by a pseudo-spectral method for numerically solving the propagating soliton solution of (\ref{eq:NLS}) such as the split-step Fourier method (SSFM) with the second-order accuracy \cite{Yang2010}. We also present the order convergence of the CNFD scheme. For this purpose, 
we first define the relative error in measuring the amplitude parameter $A(t)$ at the time $t$ as follows:
\begin{itemize}
\item $E_A^{(CNFD)} = |A^{(CNFD)} - A^{(th)}|/A^{(th)}$,
\item $E_A^{(SSFM)} = |A^{(SSFM)} - A^{(th)}|/A^{(th)}$,
\end{itemize}
where $A^{(CNFD)}$ and $A^{(SSFM)}$ are measured by the simulations of  (\ref{eq:NLS}) using the CNFD scheme and the SSFM scheme, respectively, while 
$A^{(th)}$ is measured from  (\ref{amplitude}).
Additionally, we also define the relative difference in measuring the amplitude parameter $A(t)$ at the time $t$ obtained by the CNFD scheme to the one obtained by the SSFM scheme: 
$$D_{A} = | A^{(CNFD)}-A^{(SSFM)}|/A^{(SSFM)}.$$
Second, we define the relative error in measuring the soliton solution profile of $u(x,y,t)$ at the time $t$ as follows:
\begin{itemize}
\item $E_{2,h}^{(CNFD)} = \big\lVert|u^{(CNFD)}| - |u^{(th)}|\big\rVert_{2,h}/\big\lVert u^{(th)}\big\rVert_{2,h}$,
\item $E_{2,h}^{(SSFM)} = \big\lVert|u^{(SSFM)}| - |u^{(th)}|\big\rVert_{2,h}/\big\lVert u^{(th)}\big\rVert_{2,h}$,
\item $E_{1,h}^{(CNFD)} = \big| |u^{(CNFD)}| - |u^{(th)}|\big|_{1,h}/\big| |u^{(th)}|\big|_{1,h}$,
\item $E_{1,h}^{(SSFM)} = \big| |u^{(SSFM)}| - |u^{(th)}|\big|_{1,h}/\big| |u^{(th)}|\big|_{1,h}$,
\end{itemize}
where $u^{(CNFD)}$ and $u^{(SSFM)}$ are measured by the simulations of  (\ref{eq:NLS}) using the CNFD scheme and the SSFM scheme, respectively, while 
$u^{(th)}$ is measured from  (\ref{single_soliton1}).
Additionally, we also define the relative difference in measuring the solution $u(x,y,t)$ at the time $t$ obtained by the CNFD scheme to the one obtained by the SSFM scheme as follows:
\begin{itemize}
\item $D_{2,h} = \big\lVert|u^{(CNFD)}| - |u^{(SSFM)}|\big\rVert_{2,h}/\big\lVert u^{(SSFM)}\big\rVert_{2,h}$,
\item $D_{1,h} = \big| |u^{(CNFD)}| - |u^{(SSFM)}|\big|_{1,h}/\big| |u^{(SSFM)}| \big|_{1,h}$.
\end{itemize}

As a concrete example, the simulations are carried out on the domain $\Omega = [-40,40]\times [-40,40]$ with $\lambda =1$ and $\varepsilon = 0.01$. 
The parameters for the initial condition are: $A_0=1$, $(x_0,y_0)=(-5, 4.5)$, $(d_1,d_2)=(2, -1.8)$, and $\alpha_0=0$. 
Moreover, the ground state $v_0$ in (\ref{IC}) is computed by the simulation of (\ref{elliptic_eq1}) using the AITEM scheme. In the simulations of (\ref{elliptic_eq1}),  we use the input function $\tilde v_0 = \sech\left[(x-x_0)^2 + (y-y_0)^2\right]$
and use the input power value of the beam as $\tilde P_0 =22.5$. One then obtains the power value of the soliton $u_0$ as $P=22.5$ at $\mu=0.1692$. We also use the final time $t_f=5$, as an example, in the simulations of (\ref{eq:NLS}) with the CNFD scheme and with the SSFM scheme.  
Furthermore, to solve nonlinear system \eqref{CNFD} using the CNFD scheme, the system is linearized by the fixed point method:
\begin{align*}
	\left\{
	\begin{array}{c}
		i\frac{U^{{n,l+1}}-U^n}{\tau}+\bigg[\delta^2+\lambda\frac{F(|U^{n,l}|^2)-F(|U^{n}|^2)}{|U^{n,l}|^2-|U^{n}|^2} + i\varepsilon |\frac{U^{n,l}+U^{n}}{2}|^2\bigg]\frac{U^{l+1,n}+U^{n}}{2} =0 ~~\forall (j,k) \in \T_{JK}, l\geq 0,\\
		U^{n,0}= U^n;~U^{n,l}_{j,k}=U^n_{j,k}= 0, ~\forall (j,k) \in \T^0_{JK};~U^{n,l+1}\in X_{JK}.
	\end{array}
	\right.
\end{align*}
 It can be shown that $\{U^{n,l}\}_{l=0,1,\cdots}$ converges $U^{n+1}$. In practice, one can choose $U^{n+1} = U^{n,l+1}$ when
\begin{align*}
	\frac{\|U^{n,l+1}-U^{n,l}\|_{2,h}}{\|U^{n,l+1}\|_{2,h}} \leq 10^{-8}.
\end{align*}  

We first present the simulation results for the mesh size $h=2^{-4}$ and time step $\tau=2^{-7}$.  
Figure \ref{fig1} shows the initial soliton profile $|u(x,y,0)|$ and its evolution profiles $|u(x,y,t)|$ obtained by the simulation of (\ref{eq:NLS}) using the CNFD scheme of (\ref{CNFD}). At each time $t=0,1,2,3,4,5$, the soliton profile is shown by using the level colormap and its soliton amplitude value is calculated.  
We observe that $E_A^{(CNFD)}\le 3.2745$E-4, $E_A^{(SSFM)}\le 3.4993$E-4, and $D_{A}\le 2.2474$E-5 for $0<t\le t_f=5$.

Next, we show the simulation results  with varying values of $h$ and $\tau$.
Table \ref{table3} presents the relative errors in measuring the soliton profiles obtained by the simulations and the theoretical calculations of  (\ref{single_soliton1}) while Table \ref{table4} shows a comparison for the soliton profiles obtained by the CNFD scheme and the SSFM scheme. Furthermore, on the lines 2 and 4 of each row of Table \ref{table4}, the second order convergence rate of the CNFD scheme for $\|\cdot\|_{2,h}$-norm and $|\cdot|_{1,h}$-norm are illustrated, respectively. 
These numerical results above validated our CNFD scheme.

% fig 1.
\begin{figure}[h]
\begin{center}
\centering
\includegraphics[width=1.0\linewidth]{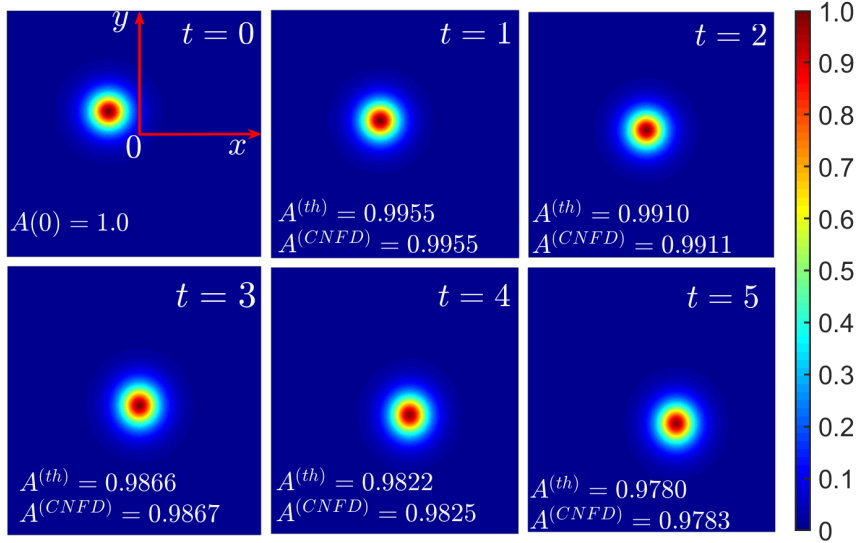} 
\caption{(Color online) The soliton profile at $t=0$ and its evolution obtained by the simulation of (\ref{eq:NLS}) using the CNFD scheme of  (\ref{CNFD}) with parameters as $h=2^{-4}$ and $\tau=2^{-7}$.
}
\label{fig1}
\end{center}
\end{figure}

\begin{table}{ }
	\centering
	\begin{tabular}{ |c|c|c|c|c|c|} 
		\hline Time
		 & Relative & $h=2^{-2}$ &  $h=2^{-3}$ &  $h=2^{-4}$ &  $h=2^{-5}$ \\  
		 & error  & $\tau=2^{-5}$ &  $\tau=2^{-6}$ &  $\tau=2^{-7}$ &  $\tau=2^{-8}$ \\ 
		\hline
		\multirow{4}{*}{t=1}
		& $E_{2,h}^{(CNFD)}$ &8.7040E-3  &3.3323E-3  &2.8474E-3  &2.8622E-3 \\ 
		& $E_{2,h}^{(SSFM)}$ &2.8735E-3  &2.8767E-3  &2.8784E-3   &2.8792E-3  \\ 
		& $E_{1,h}^{(CNFD)}$ &1.6161E-2  &7.0055E-3  &6.6400E-3   &6.7738E-3 \\ 
		& $E_{1,h}^{(SSFM)}$ &6.8024E-3  &6.8226E-3  &6.8308E-3   &6.8344E-3  \\ 
		\hline
%		\multirow{4}{*}{t=2}
%		& $E_{2,h}^{(CNFD)}$ &1.7100E-2  &6.0850E-3  &5.2681E-3  &5.3731E-3 \\ 
%		& $E_{2,h}^{(SSFM)}$ &5.4154E-3  &5.4254E-3  &5.4306E-3   &5.4331E-3  \\ 
%		& $E_{1,h}^{(CNFD)}$ &3.1037E-2  &1.1358E-2  &1.0910E-2   &1.1353E-2 \\ 
%		& $E_{1,h}^{(SSFM)}$ &1.1465E-2  &1.1507E-2  &1.1526E-2   &1.1535E-2  \\ 
%		\hline
		\multirow{4}{*}{t=3}
		& $E_{2,h}^{(CNFD)}$ &2.5497E-2  &8.5926E-3  &7.5126E-3  &7.7461E-3 \\ 
		& $E_{2,h}^{(SSFM)}$ &7.8298E-3  &7.8479E-3  &7.8574E-3   &7.8622E-3  \\ 
		& $E_{1,h}^{(CNFD)}$ &4.5495E-2  &1.4835E-2  &1.4236E-2   &1.5012E-2 \\ 
		& $E_{1,h}^{(SSFM)}$ &1.5222E-2  &1.5283E-2  &1.5312E-2   &1.5327E-2  \\ 
		\hline
%		\multirow{4}{*}{t=4}
%		& $E_{2,h}^{(CNFD)}$ &3.3864E-2  &1.1032E-2  &9.7813E-3  &1.0170E-2 \\ 
%		& $E_{2,h}^{(SSFM)}$ &1.0302E-2  &1.0329E-2  &1.0343E-2   &1.0350E-2  \\ 
%		& $E_{1,h}^{(CNFD)}$ &5.9246E-2  &1.8188E-2  &1.7516E-2   &1.8627E-2 \\ 
%		& $E_{1,h}^{(SSFM)}$ &1.8932E-2  &1.9013E-2  &1.9051E-2   &1.9071E-2  \\ 
%		\hline
		\multirow{4}{*}{t=5}
		& $E_{2,h}^{(CNFD)}$ &4.2145E-2  &1.3519E-2  &1.2217E-2  &1.2779E-2 \\ 
		& $E_{2,h}^{(SSFM)}$ &1.2963E-2  &1.2999E-2  &1.3018E-2   &1.3028E-2  \\ 
		& $E_{1,h}^{(CNFD)}$ &7.2319E-2  &2.1808E-2  &2.1222E-2   &2.2646E-2 \\ 
		& $E_{1,h}^{(SSFM)}$ &2.3040E-2  &2.3137E-2  &2.3184E-2   &2.3208E-2  \\ 
		\hline
	\end{tabular}
	\caption{The relative errors in measuring the soliton profiles obtained by the numerical experiments using the CNFD scheme and SSFM scheme and the theoretical calculations of  (\ref{single_soliton1}).% at $t=1,3,5$ with differences mesh size $h$ and time step $\tau$.
	}	\label{table3}
\end{table}
%\subsection{Numerical results with varying $h$ and $\tau$}
\begin{table}[t]
	\centering
	\begin{tabular}{ |c|c|c|c|c|c|} 
		\hline Time
		 & Relative difference & $h=2^{-2}$ &  $h=2^{-3}$ &  $h=2^{-4}$ &  $h=2^{-5}$ \\  
		 & and convergence rate  & $\tau=2^{-5}$ &  $\tau=2^{-6}$ &  $\tau=2^{-7}$ &  $\tau=2^{-8}$ \\ 
		\hline
		\multirow{4}{*}{t=1}
		&$D_{2,h}$&8.6271E-3  &2.1669E-3  &5.4218E-4  &1.3550E-4 \\ 
		& Rate    &1.9907  &1.9983  &2.0007  & \\ 
		&$D_{1,h}$&1.6437E-2  &4.1285E-3  &1.0324E-3  &2.5805E-4 \\ 
		& Rate    &1.9906  &1.9994  &2.0004  & \\ 
		\hline
		\multirow{4}{*}{t=2}
		&$D_{2,h}$&1.7503E-2  &4.3863E-3  &1.0963E-3  &2.7365E-4 \\ 
		& Rate    &1.9952  &2.0004  &2.0031  & \\ 
		&$D_{1,h}$&3.3373E-2  &8.3157E-3  &2.0731E-3  &5.1620E-4 \\ 
		& Rate    &2.0066  &2.0056  &2.0081  & \\ 
		\hline
		\multirow{4}{*}{t=3}
		&$D_{2,h}$&2.6596E-2  &6.6441E-3  &1.6588E-3  &4.1364E-4 \\ 
		& Rate    &2.0015  &2.0027  &2.0051  & \\ 
		&$D_{1,h}$&4.9961E-2  &1.2350E-2  &3.0715E-3  &7.6360E-4 \\ 
		& Rate    &2.0227  &2.0104  &2.0112  & \\ 
		\hline
		\multirow{4}{*}{t=4}
		&$D_{2,h}$&3.5779E-2  &8.9094E-3  &2.2220E-3  &5.5373E-4 \\ 
		& Rate    &2.0080  &2.0048  &2.0064  & \\ 
		&$D_{1,h}$&6.5798E-2  &1.6164E-2  &4.0135E-3  &9.9715E-4 \\ 
		& Rate    &2.0353  &2.0137  &2.0125  & \\ 
		\hline
		\multirow{4}{*}{t=5}
		&$D_{2,h}$&4.4951E-2  &1.1160E-2  &2.7810E-3  &6.9270E-4 \\ 
		& Rate    &2.0139  &2.0066  &2.0074  & \\ 
		&$D_{1,h}$&8.0890E-2  &1.9780E-2  &4.9057E-3  &1.2185E-3 \\ 
		& Rate    &2.0449  &2.0160  &2.0130  & \\ 
		\hline
	\end{tabular}
	\caption{The relative differences in measuring the soliton profiles obtained by the CNFD scheme and SSFM scheme and the convergence rate of the CNFD scheme. }\label{table4}
\end{table}

%\begin{table}{ }
%	\centering
%	\begin{tabular}{ |c|c|c|c|c|c|} 
%		\hline
%		 &  & $h=2^{-2}$ &  $h=2^{-3}$ &  $h=2^{-4}$ &  $h=2^{-5}$ \\  
%		 &  & $\tau=2^{-5}$ &  $\tau=2^{-6}$ &  $\tau=2^{-7}$ &  $\tau=2^{-8}$ \\ 
%		\hline
%		\multirow{4}{*}{t=1s} & $\|e\|_{2,h}$ & 7.553E-2 & 1.8914E-2 & 4.7138E-3 & 1.169E-3\\ 
%		& Rate & 1.9966 & 2.0062 &  2.0155 &  \\ 
%		& $|e|_{1,h}$ &1.3028E-1 & 3.2756E-2 & 8.1846E-3  & 2.1332E-3\\ 
%		& Rate & 1.9886	 & 2.0011 &  1.9184 &  \\ 
%		\hline
%		\multirow{4}{*}{t=2s} & $\|e\|_{2,h}$ &1.5117E-1
%		 & 3.7821E-2 & 9.4227E-3 & 2.3369E-3\\ 
%		& Rate & 1.9986 & 2.0069 &  2.0160 &  \\ 
%		& $|e|_{1,h}$ &2.5818E-1 & 6.4767E-2 & 1.6155E-2  & 4.065E-3\\ 
%		& Rate & 1.9931	 & 2.0069
%		 &  1.9888 &  \\ 
%		\hline
		
%		\multirow{4}{*}{t=4s} & $\|e\|_{2,h}$ &3.0553E-1
%		& 7.6261E-2 & 1.8986E-2 & 4.70679E-3\\ 
%		& Rate & 2.0032 & 2.0084 &  2.0168 &  \\ 
%		& $|e|_{1,h}$ &5.0982E-1 & 1.2735E-1 & 3.1719E-2  & 7.8949E-3\\ 
%		& Rate &2.0017	 & 2.0074
%		&  2.0088 &  \\ 
%		\hline
		
%		\multirow{4}{*}{t=6s} & $\|e\|_{2,h}$ &4.6487E-1
%		& 1.1583E-1 & 2.8819E-2 & 7.1426E-3\\ 
%		& Rate & 2.0067 & 2.0096 &  2.0174 &  \\ 
%		& $|e|_{1,h}$ &7.6045E-1 & 1.8944E-1 &4.7145E-2  & 1.1710E-2\\ 
%		& Rate &2.0070	 &2.0091
%		&  2.0131 &  \\ 
%		\hline
%	\end{tabular}
%	\caption{Errors in norm $\|\cdot\|_{2,h}$ and $|\cdot|_{1,h}$ at time $t=1,2,4,6s$ with differences mesh size $h$ and time step $\tau$ }\label{table:errorL2H01}
%\end{table}

\section{Conclusion}
\label{Con}

The (2+1)D saturable NLS equation, which is of a common class of (2+1)D NLS equations with modified nonlinearity, can stabilized the 2D solitons at high speed and high power. Due to the nonintegrability of the (2+1)D saturable NLS equation, its stability, and its extensive applications in guided-wave optics and 2D materials, the numerical techniques for the (2+1)D saturable NLS have recently become one of the central issues in 2D material simulations. In this work, we established and analyzed the CNFD scheme for the (2+1)D saturable nonlinear Schr\"odinger equation in a 2D nonlinear optical medium with cubic loss. This perturbation can arise due to the presence of presence of TPA in optical materials such as silicon photonic crystals.  By appropriate settings and estimations, we proved the existence and the uniqueness of the numerical solution of the (2+1)D saturable NLS model. Under a mild condition of $\tau\lesssim h$, we showed that the convergence rate is at order $O(\tau^2+h^2)$ in $l^2-$norm and discrete $H^1-$norm. To validate the numerical scheme, we intensively simulated the (2+1)D NLS model by the CNFD scheme with varying both spatial mesh size $h$ and time step $\tau$. Additionally, we implemented the CNFD to validate an theoretical expression for the amplitude dynamics of a single 2D soliton propagation under the effect of cubic loss. Moreover, we also verified the numerical results of the CNFD scheme with the conventional SSFM scheme. We obtained an excellent agreement between the numerical simulation results and the theoretical results. More specifically, for $h=2^{-3}$ and $\tau=2^{-6}$, at $t=1$, the relative errors in measuring the soliton profiles obtained by the numerical experiments using the CNFD scheme and SSFM scheme and the theoretical calculations of equation (\ref{single_soliton1}) are in the same order of $10^{-3}$ and at $t=5$, these relative errors are in the same order of $10^{-2}$. The CNFD scheme proposed in this work can open a way to numerically study the propagation of 3D soliton (also known as light bullets) in a modified nonlinear medium under framework of a saturable $(3+1)$D NLS equation with a deterministic or a random noise.

\appendix
\section{Estimate the consistency error}
\label{Appendix}
\begin{lm}\label{lm:est_rn}
	Let the solution of (\ref{eq:NLS}) be smooth enough and let $r^n\in X_{JK}$ defined by \eqref{df:rn} be the consistency error of the method (\ref{soltuonV_globallylipschtiz}) or (\ref{CNFD}). Then
	\begin{align*}
		\max_{n}|r^n_{j,k}| \leq c(\tau^2+h^2),~~~~\forall  (j,k)\in\T_{JK}.
	\end{align*}
\end{lm}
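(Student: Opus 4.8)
The plan is to exploit the assumed smoothness of the exact solution and to estimate $r^n_{j,k}$ by comparing the discrete scheme to the PDE \eqref{eq:NLS} evaluated at the half-time level $m:=(n+1/2)\tau$. First I would observe that, since $u$ is smooth on $\overline{\Omega}\times[0,T]$, we have $|u^n_{j,k}|\le\max|u|<M$ for every $n$ and $(j,k)$; hence the truncations in the auxiliary functions are never active, so $\tilde{\psi}(u^{n+1}_{j,k},u^n_{j,k})=\psi(u^{n+1}_{j,k},u^n_{j,k})$ and $\tilde{\varphi}(u^{n+1}_{j,k},u^n_{j,k})=\varphi(u^{n+1}_{j,k},u^n_{j,k})$. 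Using that the exact solution satisfies $iu_t+\Delta u+\lambda f(|u|^2)u+i\varepsilon|u|^2u=0$ at $(x_j,y_k,m)$, I would subtract this identity (which equals zero) from \eqref{df:rn}, so that $r^n_{j,k}$ splits into four consistency contributions — a time term, a Laplacian term, a saturable term, and a cubic-loss term — each of which is then Taylor-expanded about $(x_j,y_k,m)$.

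The three standard terms are quick. For the time term, a Taylor expansion about $m$ gives $\delta_t^+u^n_{j,k}=\frac{u^{n+1}_{j,k}-u^n_{j,k}}{\tau}=u_t(x_j,y_k,m)+O(\tau^2)$, the odd symmetry about the midpoint killing the $O(\tau)$ part. For the Laplacian term I would use that the averaging $u^{n+1/2}_{j,k}=\frac{u^{n+1}_{j,k}+u^n_{j,k}}{2}=u(x_j,y_k,m)+O(\tau^2)$ is second order in time, while the centered second differences satisfy $\delta^2_x u=u_{xx}+O(h^2)$ and $\delta^2_y u=u_{yy}+O(h^2)$ (with $\Delta x=\Delta y=h$), so that $\delta^2 u^{n+1/2}_{j,k}=\Delta u(x_j,y_k,m)+O(\tau^2+h^2)$. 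For the cubic loss, $\varphi(u^{n+1}_{j,k},u^n_{j,k})=|u^{n+1/2}_{j,k}|^2u^{n+1/2}_{j,k}$, and the same midpoint-averaging estimate yields $\varphi(u^{n+1}_{j,k},u^n_{j,k})=|u(x_j,y_k,m)|^2u(x_j,y_k,m)+O(\tau^2)$.

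The main obstacle is the saturable term $\psi(u^{n+1}_{j,k},u^n_{j,k})$, because of its difference-quotient/integral structure. Here I would use the representation $\psi(z,w)=\frac{z+w}{2}\int_0^1 f(|w|^2+t(|z|^2-|w|^2))\,dt$ together with $|f|\le 1$, $|f'|\le 1$, $|f''|\le 2$ on $[0,\infty)$. Expanding the integrand around $s_0=|u(x_j,y_k,m)|^2$ gives $\int_0^1 f\,dt=f(s_0)+f'(s_0)\big(\frac{|u^{n+1}_{j,k}|^2+|u^n_{j,k}|^2}{2}-s_0\big)+O(\tau^2)$, and the crucial cancellation is that the symmetric average $\frac{|u^{n+1}_{j,k}|^2+|u^n_{j,k}|^2}{2}=s_0+O(\tau^2)$ eliminates the first-order term; equivalently one may invoke the symmetry $\psi(z,w)=\psi(w,z)$ (visible from the substitution $t\mapsto 1-t$) to exclude any odd power of $\tau$. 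Combined with $\frac{u^{n+1}_{j,k}+u^n_{j,k}}{2}=u(x_j,y_k,m)+O(\tau^2)$, this yields $\psi(u^{n+1}_{j,k},u^n_{j,k})=f(s_0)\,u(x_j,y_k,m)+O(\tau^2)$, i.e. the continuous nonlinearity at the midpoint up to $O(\tau^2)$. Adding the four contributions and bounding every remainder by supremum norms of the relevant derivatives of $u$ (finite by smoothness) together with the bounds on $f,f',f''$ gives $|r^n_{j,k}|\le c(\tau^2+h^2)$, where $c$ depends only on $\|u\|_{C^4}$, $\lambda$, and $\varepsilon$; since these bounds are uniform in $n$ and $(j,k)$, taking the maximum completes the argument.
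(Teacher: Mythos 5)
Your proposal is correct and follows essentially the same route as the paper: compare the scheme to the PDE at the half-time level $t_{n+1/2}$, use midpoint Taylor expansions for the time difference, the averaged centered second differences, and the cubic term, and handle the saturable term via the integral representation of $\psi$ together with the bounds on $f$, $f'$, $f''$ and the second-order accuracy of the symmetric average of $|u^{n}|^2$ and $|u^{n+1}|^2$. The only (cosmetic) differences are that you expand the integral directly about $|u(x_j,y_k,t_{n+1/2})|^2$ in one step where the paper first compares $\int_0^1 f\,dt$ with $f$ of the arithmetic mean and then that with $f$ at the midpoint, and that you make explicit the (correct) observation that the truncations $\tilde\psi,\tilde\varphi$ are inactive on the exact solution.
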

\begin{proof}
	From definition of $r_{j,k}^n$, there holds
	\begin{align*}
		\!r^n_{j,k} = i\delta^+_t u^{n}_{j,k}
        +\frac{1}{2}(\delta^2_{x}u^{n+1}_{j,k} + \delta^2_{x}u^{n}_{j,k}) + \frac{1}{2}(\delta^2_{y}u^{n+1}_{j,k} + \delta^2_{y}u^{n}_{j,k}) +\lambda\psi(u^{n+1}_{j,k},u^{n}_{j,k}) +i\varepsilon \varphi(u^{n+1}_{j,k},u^{n}_{j,k}),
	\end{align*}
	where we remind that $u^{n}_{j,k}=u(x_j,y_k,t_{n})$ for all $j=\overline{0,J}$, $k=\overline{0,K}$ and $n=\overline{0,N}$.\\ Now we will estimate the terms of right hand side of previous definition. Applying the Taylor series, there exists $s_1\in(t_{n},t_{n+1})$ such that
	\begin{align}
		\bigg|\delta^+_t u^{n}_{j,k} - u_t(x_j,y_k,t_{n+1/2})\bigg| \leq \frac{|u_{ttt}(x_j,y_k,s_1)|}{8}\tau^2, \label{ineq:est_dt}
	\end{align}
 where $t_{n+1/2}$ is the midpoint of $(t_{n},t_{n+1})$.
 Moreover, there exist $\alpha_1,\alpha_2\in(x_{j-1},x_{j+1})$, $\beta_1,\beta_2\in (y_{k-1},y_{k+1}),$ and $s_2,s_3\in(t_{n},t_{n+1})$ such that
	\begin{align*}
		&\big|\delta^2_{x}u^{n}_{j,k} - u_{xx}(x_j,y_k,t_{n})\big| \leq \frac{|u_{xxxx}(\alpha_1,y_k,t_{n})|}{12}h^2,\\
        &\big|\delta^2_{x}u^{n+1}_{j,k} - u_{xx}(x_j,y_k,t_{n+1})\big| 
        \leq \frac{|u_{xxxx}(\alpha_2,y_k,t_{n+1})|}{12}h^2,\\
		&\big|u_{xx}(x_j,y_k,t_{n}) + u_{xx}(x_j,y_k,t_{n+1})-2u_{xx}(x_j,y_k,t_{n+1/2})\big| \leq \frac{|u_{xxtt}(x_j,y_k,s_2)|}{4}\tau^2.
	\end{align*}
	From three previous inequalities, one can obtain
	\begin{align}
		&\big|\delta^2_{x}u^{n}_{j,k}+\delta^2_{x}u^{n+1}_{j,k} - 2u_{xx}(x_j,y_k,t_{n+1/2})\big| \nonumber\\
		&\leq \frac{|u_{xxxx}(\alpha_1,y_k,t_{n})|}{12}h^2 + \frac{|u_{xxxx}(\alpha_2,y_k,t_{n+1})|}{12}h^2 +\frac{|u_{xxtt}(x_j,y_k,s_2)|}{4}\tau^2. \label{ineq:est_dxx}
	\end{align}
    Similarly, we have
	\begin{align*}
		&\big| \delta^2_{y}u^{n}_{j,k}- u_{yy}(x_j,y_k,t_{n})\big| \leq \frac{|u_{yyyy}(x_j,\beta_1,t_{n})|}{12}h^2,\\
		&\big|\delta^2_{y}u^{n+1}_{j,k} - u_{yy}(x_j,y_k,t_{n+1})\big| \leq \frac{|u_{yyyy}(x_j,\beta_2,t_{n+1})|}{12}h^2,\\
		&\big|u_{yy}(x_j,y_k,t_{n}) + u_{yy}(x_j,y_k,t_{n+1})-2u_{yy}(x_j,y_k,t_{n+1/2})\big| \leq \frac{|u_{yytt}(x_j,y_k,s_3)|}{4}\tau^2.
	\end{align*}
	From three previous inequalities, it yields
	\begin{align}
		&\big|\delta^2_{y}u^{n}_{j,k} +\delta^2_{y}u^{n+1}_{j,k} -2u_{yy}(x_j,y_k,t_{n+1/2})\bigg|\nonumber\\
		&\leq \frac{|u_{yyyy}(x_j,\beta_1,t_{n})|}{12}h^2 +\frac{|u_{yyyy}(x_j,\beta_2,t_{n+1})|}{12}h^2 + \frac{|u_{yytt}(x_j,y_k,s_3)|}{4}\tau^2. \label{ineq:est_dyy}
	\end{align}
	There exists $s_4\in (t_{n},t_{n+1})$ such that
	\begin{align}
		\bigg|\bigg|\frac{u(x_{j},y_{k},t_{n}) + u(x_{j},y_{k},t_{n+1})}{2}\bigg|^2 - |u(x_{j},y_{k},t_{n+1/2})|^2\bigg| &\leq \frac{|u_{tt}(x_j,y_k,s_4)|^2}{16}\tau^4,\label{ineq:est_binhphuong}\\
		\bigg|\frac{u(x_{j},y_{k},t_{n}) + u(x_{j},y_{k},t_{n+1})}{2} - u(x_{j},y_{k},t_{n+1/2})\bigg| &\leq \frac{|u_{tt}(x_j,y_k,s_4)|}{4}\tau^2. \label{ineq:est_trungbinh}
	\end{align}
 %%%%%%%%%%
	For $z_1,z_2\in \R$, one can get
	\begin{align*}
		&L_2=\int^1_0f(z_1+t(z_2-z_1))dt-f(\frac{z_2+z_1}{2})\\
		&=\int_0^{1/2}f(z_1+t(z_2-z_1))dt + \int_0^{1/2}f(z_1+(1-t)(z_2-z_1))dt-f(\frac{z_2+z_1}{2})\\
		&=\int_0^{1/2}[f(z_1+t(z_2-z_1))-f(\frac{z_2+z_1}{2})]dt + \int_0^{1/2}[f(z_1+(1-t)(z_2-z_1))-f(\frac{z_2+z_1}{2})]dt.
	\end{align*}
%%%%%%%%%%%%%%%%%%%
It implies,
	\begin{align*}
		L_2&=\int_0^{1/2}(t-\frac{1}{2})(z_2-z_1)\int_0^1f'\bigg(\frac{z_2+z_1}{2}+s(t-\frac{1}{2})(z_2-z_1)\bigg)dsdt\\
		&-\int_0^{1/2}(t-\frac{1}{2})(z_2-z_1)\int_0^1f'\bigg(\frac{z_2+z_1}{2}-s(t-\frac{1}{2})(z_2-z_1)\bigg)dsdt\\
		&=\int_0^{1/2}(t-\frac{1}{2})(z_2-z_1)\int_0^1\bigg[f'\bigg(\frac{z_2+z_1}{2}+s(t-\frac{1}{2})(z_2-z_1)\bigg)\\
        &-f'\bigg(\frac{z_2+z_1}{2}-s(t-\frac{1}{2})(z_2-z_1)\bigg)\bigg]dsdt.
	\end{align*}
%%%%%%%%%%%%%%%%
Therefore,
	\begin{align*}
		L_2
		=2\int_0^{1/2}(t-\frac{1}{2})^2(z_2-z_1)^2\int_0^1s\int_0^1 f''(g_2(t,s,s_1))ds_1dsdt,
	\end{align*}
    where
    $$
        g_2(t,s,s_1) = \frac{z_2+z_1}{2}-s(t-\frac{1}{2})(z_2-z_1)+2s_1s(t-\frac{1}{2}(z_2-z_1)).
    $$
 %%%%%%%%%%%%%%%%%%%%
	Since $|f''|$ is bounded by $1$, previous expression and definition of $L_2$ yield  
	\begin{align*}
		\bigg|\int^1_0f(z_1+t(z_2-z_1))dt-f\bigg(\frac{z_2+z_1}{2}\bigg)\bigg| \leq \frac{|z_2-z_1|^2}{24}.
	\end{align*}
	By choosing $z_1=|u(x_j,y_k,t_{n})|^2$ and $z_2=|u(x_j,y_k,t_{n+1})|^2$, the previous inequality can be written as
	\begin{align*}
		&\bigg|\int^1_0f(|u(x_j,y_k,t_{n})|^2+t(|u(x_j,y_k,t_{n+1})|^2-|u(x_j,y_k,t_{n})|^2))dt\\
  &-f\bigg(\frac{|u(x_j,y_k,t_{n+1})|^2+|u(x_j,y_k,t_{n})|^2}{2}\bigg)\bigg|
		 \leq \frac{||u(x_j,y_k,t_{n+1})|^2-|u(x_j,y_k,t_{n})|^2|^2}{24}.
	\end{align*} 
 We then have the following relations.
	There exists $s_{5}\in(t_n,t_{n+1})$ such that
	\begin{align}
		&\bigg|\int^1_0f(|u(x_j,y_k,t_{n})|^2+t(|u(x_j,y_k,t_{n+1})|^2-|u(x_j,y_k,t_{n})|^2))dt\nonumber\\
 & -f\bigg(\frac{|u(x_j,y_k,t_{n+1})|^2+|u(x_j,y_k,t_{n})|^2}{2}\bigg)\bigg|	\leq \frac{||u(x_j,y_k,s_{5})|^2_t|^2}{24}\tau^2.\label{ineq:est_intf}
	\end{align} 
	There exists $\theta$ between  $\frac{|u(x_j,y_k,t_{n+1})|^2+|u(x_j,y_k,t_{n})|^2}{2}$ and $|u(x_j,y_k,t_{n+1/2})|^2$ such that
	\begin{align*}
	&f\bigg(\frac{|u(x_j,y_k,t_{n+1})|^2+|u(x_j,y_k,t_{n})|^2}{2}\bigg)-f(|u(x_j,y_k,t_{n+1/2})|^2)\\
	&=\bigg[\bigg(\frac{|u(x_j,y_k,t_{n+1})|^2+|u(x_j,y_k,t_{n})|^2}{2}\bigg)-|u(x_j,y_k,t_{n+1/2})|^2\bigg]f'(\theta).
	\end{align*}
	There exists $s_{6}\in (t_n,t_{n+1})$ such that
	\begin{align*}
		\bigg|\bigg(\frac{|u(x_j,y_k,t_{n+1})|^2+|u(x_j,y_k,t_{n})|^2}{2}\bigg)-|u(x_j,y_k,t_{n+1/2})|^2\bigg|\leq \frac{||u(x_j,y_k,s_{6})|^2_{tt}|}{2}\tau^2.
	\end{align*}
	Using $|f'(s)|\leq 1$ for all $s\geq 0$, it arrives at
	 \begin{align}
		\!\!\bigg|f\bigg(\frac{|u(x_j,y_k,t_{n+1})|^2+|u(x_j,y_k,t_{n})|^2}{2}\bigg)-f(|u(x_j,y_k,t_{n+1/2})|^2)\bigg|\leq \frac{||u(x_j,y_k,s_{6})|^2_{tt}|}{2}\tau^2.\label{ineq:est_fbinhphuong}
	\end{align}
	Applying the triangular inequality from two inequalities  \eqref{ineq:est_intf} and \eqref{ineq:est_fbinhphuong}, it implies
	\begin{align}
		\bigg|\int^1_0f(|u(x_j,y_k,t_{n})|^2+t(|u(x_j,y_k,t_{n+1})|^2&-|u(x_j,y_k,t_{n})|^2))dt-f(|u(x_j,y_k,t_{n+1/2})|^2)\bigg|\nonumber\\
		&\leq \frac{12||u(x_j,y_k,s_{6})|^2_{tt}|+||u(x_j,y_k,s_{5})|_t^2|^2}{24}\tau^2.\label{ineq:est_fmidpoint}
	\end{align}
From inequalities \eqref{ineq:est_dt}, \eqref{ineq:est_dxx}, \eqref{ineq:est_dyy}, \eqref{ineq:est_binhphuong}, \eqref{ineq:est_trungbinh}, \eqref{ineq:est_fmidpoint}, the definition of $r^n_{j,k}$, and the first equation \eqref{eq:NLS} at the point $(x_j,y_k,t_{n+1/2})$, there exits $c>0$ such that
	\begin{align*}
		|r^n_{j,k}|\leq c(\tau^2+h^2).
	\end{align*}
The proof is completed.% \qed
\end{proof}

Note that one can show the following lemma by a similar argument with some appropriate estimates.
\begin{lm}\label{lm:est_H01rn}
	Let the solution of (\ref{eq:NLS}) be smooth enough and let $r^n\in X_{JK}$ defined by \eqref{df:rn} be the consistency error of the method (\ref{soltuonV_globallylipschtiz}) or (\ref{CNFD}). Then
	\begin{align*}
		\max_{n}|\delta^+r^n_{j,k}| \leq c(\tau^2+h^2),~~~~ ~~~~\forall~j=0,1,\cdots,J-1,~k=0,1,\cdots,K-1.
	\end{align*}
\end{lm}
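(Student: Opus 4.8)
The plan is to mirror the argument of Lemma \ref{lm:est_rn} one difference-order higher, exploiting that every difference operator appearing in $r^n_{j,k}$ commutes with the forward differences $\delta^+_x$ and $\delta^+_y$. I treat $\delta^+_x r^n_{j,k}$ in detail; the estimate for $\delta^+_y r^n_{j,k}$ is identical with the roles of $x$ and $y$ interchanged, and the two components together control $|\delta^+ r^n_{j,k}|$.

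The first thing to stress is that the naive bound $|\delta^+_x r^n_{j,k}|\le (|r^n_{j+1,k}|+|r^n_{j,k}|)/h$ is useless because of the division by $h$, so genuine cancellation between $r^n_{j+1,k}$ and $r^n_{j,k}$ must be extracted. Since $\delta^+_x$ is linear, I distribute it across the five terms defining $r^n_{j,k}$ in \eqref{df:rn}. For the three linear terms, the operators $\delta^+_t,\delta^2_x,\delta^2_y$ commute with $\delta^+_x$, so $\delta^+_x r^n_{j,k}$ contains $i\delta^+_t\delta^+_x u^n_{j,k}$, the piece $\tfrac12(\delta^2_x\delta^+_x u^{n+1}_{j,k}+\delta^2_x\delta^+_x u^{n}_{j,k})$, and the analogous $\delta^2_y\delta^+_x$ terms. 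I then repeat the Taylor-with-Lagrange-remainder estimates \eqref{ineq:est_dt}, \eqref{ineq:est_dxx}, \eqref{ineq:est_dyy}, now applied to these once-more-differenced quantities: each is shown to approximate the corresponding spatial or temporal derivative of $u_x$ evaluated at the half-integer node $(x_{j+1/2},y_k,t_{n+1/2})$, with error $O(\tau^2+h^2)$. This step costs one extra order of regularity, requiring bounds on derivatives such as $u_{xxxxx}$, $u_{xxxtt}$ and $u_{xttt}$, all available under the ``smooth enough'' hypothesis.

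For the nonlinear terms $\lambda\psi(u^{n+1}_{j,k},u^n_{j,k})$ and $i\varepsilon\varphi(u^{n+1}_{j,k},u^n_{j,k})$ I would apply $\delta^+_x$ via the discrete product rule together with the integral representation of $\psi$ and the estimate for $L_2$ derived in Lemma \ref{lm:est_rn}. Because $f$, $f'$, $f''$ (and $f'''$) are bounded and $u$ together with its derivatives is bounded, the forward difference of each factor is itself $O(h)$-controlled by the corresponding $x$-derivative of $u$, so every resulting product quotient stays $O(\tau^2+h^2)$; in particular the leading discrete pieces again reproduce $\partial_x$ of the continuous nonlinear terms at $(x_{j+1/2},y_k,t_{n+1/2})$. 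I expect this to be the main obstacle, since one must carefully track the chain- and product-rule expansions of the saturable nonlinearity and confirm that no uncancelled $O(h)$ or $O(\tau)$ contribution survives the division by $h$.

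Finally I would collect the leading exact-derivative parts of all five differenced terms and observe that they coincide, up to $O(\tau^2+h^2)$, with $\partial_x$ of the left-hand side of \eqref{eq:NLS} evaluated at $(x_{j+1/2},y_k,t_{n+1/2})$. Differentiating \eqref{eq:NLS} in $x$ and using smoothness shows this quantity vanishes identically, so the leading parts cancel and only the $O(\tau^2+h^2)$ remainders remain, giving $\max_n|\delta^+_x r^n_{j,k}|\le c(\tau^2+h^2)$. The same computation with $y$ in place of $x$, evaluated at $(x_j,y_{k+1/2},t_{n+1/2})$, bounds $\delta^+_y r^n_{j,k}$, and together these yield the claimed estimate for $\delta^+ r^n_{j,k}$.
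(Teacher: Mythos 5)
Your interior argument --- commuting $\delta^+_x$ through the linear difference operators, redoing the Taylor expansions one derivative higher at the half-integer nodes, handling the nonlinear terms with a discrete product rule and the boundedness of $f,f',f'',f'''$, and closing by differentiating \eqref{eq:NLS} in $x$ --- is essentially the route the paper intends (its proof is literally ``follow the analogous line for Lemma~\ref{lm:est_rn}''), and it works for the genuinely interior indices $j=1,\dots,J-2$, $k=1,\dots,K-2$.

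The gap is at the boundary, which the lemma explicitly includes: the claim is asserted for all $j=0,\dots,J-1$ and $k=0,\dots,K-1$. At $j=0$ the formula \eqref{df:rn} does not define $r^n_{0,k}$ (it only holds on $\T_{JK}$); instead $r^n_{0,k}=0$ because $r^n\in X_{JK}$, so $\delta^+_x r^n_{0,k}=r^n_{1,k}/h$ and the cancellation between two formula-given values that drives your argument is unavailable --- equivalently, $\delta^2_x\delta^+_x u^n_{0,k}$ would require the off-grid value $u^n_{-1,k}$. Lemma~\ref{lm:est_rn} alone yields only $|r^n_{1,k}|/h\le c(\tau^2+h^2)/h$, which is useless. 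What rescues the estimate, and is the one point the paper's proof singles out, is that $u\equiv 0$ on $\partial\Omega$ forces $u_{xx}=u_{yy}=u_{xxxx}=u_{yyyy}=0$ (together with the relevant time and mixed derivatives of these quantities, obtained by differentiating the PDE and using that the nonlinearity is cubic at $u=0$) on $\partial\Omega$. The leading coefficients in the $O(\tau^2+h^2)$ expansion of $r^n_{1,k}$ are values of exactly such derivatives at distance $h$ from the boundary, hence are themselves $O(h)$, giving $r^n_{1,k}=O\bigl(h(\tau^2+h^2)\bigr)$ and thus $\delta^+_x r^n_{0,k}=O(\tau^2+h^2)$. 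The same issue and fix occur at $j=J-1$ and at $k=0$, $k=K-1$. Without this boundary observation your proof does not cover the stated index range.
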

\begin{proof}
	For $j=0,1,\cdots,J-1,~k=0,1,\cdots,K-1$, one can follow the analogous line for Lemma \ref{lm:est_rn} to obtain $\max_{n}|\delta^+r^n_{j,k}| \leq c(\tau^2+h^2)$. Noting that $\partial_{xx}u=\partial_{yy} u= \partial_{xxxx}u=\partial_{yyyy} u =0$ on the boundary $\partial\Omega$. Therefore, $\max_{n}|\delta^+r^n_{j,k}| \le c(\tau^2+h^2)$ for all $j=0,1,\cdots,J-1,~k=0,1,\cdots,K-1$.
\end{proof}

%\section*{Acknowledgements}
%This research is funded by Vietnam National University-Ho Chi Minh City (VNU-HCM) under grant number B2023-28-08. 

%\section*{Ethics declarations}
%\subsection*{Conflict of interest} 
%The authors have no conflicts of interest to declare that are relevant to the content of this article.

%\section*{References}
{}


\begin{thebibliography}{}


\bibitem{Ablowitz2011} Ablowitz, M.J.: Nonlinear Dispersive Waves: Asymptotic Analysis and Solitons. Cambridge University Press, Cambridge (2011)

\bibitem{Agrawal2003} Agrawal, G.P., Kivshar, Y.S.: Optical solitons: From Fibers to Photonic Crystals. Academic Press, San Diego (2003)

\bibitem{Bao2013a} Antoine, X., Bao, W., Besse,C.: Computational methods for the dynamics of the nonlinear Schr\"odinger/Gross–Pitaevskii equations. Comput. Phys. Commun. {\bf 184}(12), 2621-2633 (2013)

\bibitem{Bao2013b} Bao, W., Cai, Y.: Optimal error estimates of finite difference methods for the Gross-Pitaevskii equation with angular momentum rotation. Math. Comp. {\bf 82}, 99-128 (2013)

\bibitem{Bao2003} Bao, W., Jaksch, D., Markowich, P.A.: Numerical solution of the Gross-Pitaevskii equation for Bose–Einstein condensation. 
J. Comput. Phys. {\bf 187}(1), 318-342 (2003)

\bibitem{Husko2014} Blanco-Redondo, A., Husko, C., Eades, D., Zhang, Y., Li, J., Krauss, T.F., Eggleton, B.J.:  Observation of soliton compression in silicon photonic crystals. Nat. Commun. {\bf 5},  3160 (2014)

\bibitem{Torner_2010} Borovkova, O.V., Kartashov, Y.V., Torner, L.:
Stabilization of two-dimensional solitons in cubic-saturable nonlinear lattices.
Phys. Rev. A {\bf 81}, 063806 (2010)

\bibitem{Boyd-2008} Boyd, R.W.: Nonlinear Optics. Academic, San Diego (2008)

\bibitem{Malomed_2004} Chen, Y.F., Beckwitt, K., Wise, F.W., Malomed, B.A.: Criteria for the experimental observation of multidimensional optical solitons in saturable media. Phys. Rev. E {\bf 70}, 046610 (2004)

\bibitem{Nature2003} Christodoulides, D.N., Lederer, F., Silberberg, Y.: Discretizing light behaviour in linear and nonlinear waveguide lattices. Nature {\bf 424}, 817-823 (2003) 

\bibitem{Horton1996} Horton, W., Ichikawa, Y.H.: Chaos and structure in nonlinear plasmas.  World Scientific, Singapore (1996)


\bibitem{Malomed2011} Kartashov, Y.V., Malomed, B.A., Torner, L.: Solitons in nonlinear lattices. Rev. Mod. Phys. {\bf 83}, 247 (2011)

%Calculus of Variations and Partial Differential Equations 

\bibitem{Maia2013} Maia, L.A, Montefusco, E., Pellacci, B.:
Weakly coupled nonlinear Schr\"odinger systems:
the saturation effect.
Calc. Var. Partial Differ. Equ. {\bf 46}, 325-351 (2013)

\bibitem{NH2021} Nguyen, Q.M., Huynh, T.T.: Collision-induced amplitude dynamics of fast 2D solitons in saturable nonlinear media with weak nonlinear loss. Nonlinear Dyn. {\bf 104}, 4339-4353 (2021)

\bibitem{Peleg2019} Peleg, A., Chakraborty, D.:
Radiation dynamics in fast soliton collisions in the presence of cubic loss.  Physica D {\bf 406}, 132397 (2020)

\bibitem{Taha1984} Taha, T.R., Ablowitz, M.I.: Analytical and numerical aspects of certain
nonlinear evolution equations. II. Numerical, nonlinear Schr\"odinger equation.
J. Comput. Phys. {\bf 55}(2),  203-230 (1984)

\bibitem{Malomed2002} Towers, I., Malomed, B.A.: Stable (2+1)-dimensional solitons in a layered medium with sign-alternating Kerr nonlinearity. J. Opt. Soc. Am. B {\bf 19}, 537-543 (2002)

\bibitem{Loon2018} Van Loon, M.A.W, Stavrias, N., Le, N.H.,
Litvinenko, K.L., Greenland, P.T., Pidgeon, C.R., Saeedi, K., 
Redlich, B., Aeppli, G., Murdin, B.N.: Giant multiphoton absorption for THz resonances in silicon hydrogenic donors.
Nat. Photonics {\bf 12}, 179-184 (2018)

\bibitem{Weideman1986} Weideman, J.A.C., Herbst, B.M.: Split-step methods for the solution of the nonlinear Schr\"odinger equation. 
SIAM J. Numer. Anal. {\bf 23}(3), 485-507 (1986)

\bibitem{Weideman1997} Weideman, J.A.C., Herbst, B.M.: Finite difference methods for an AKNS eigenproblem. Math. Comput. Simul. {\bf 43}, 77-88 (1997)

\bibitem{Weilnau2022} Weilnau, C., Ahles, M., Petter, J., Tr\"ager, D., Schr\"oder, J., Denz, C.: Spatial optical (2+1)-dimensional scalar- and vector-solitons in saturable nonlinear media. Ann. Phys. (Leipzig)  {\bf 11}, 573 (2002)

\bibitem{Taha2003} Xu, X., Taha, T.: Parallel Split-Step Fourier Methods for Nonlinear Schr\"odinger-Type Equations. 
J. Math. Model. Algorithms {\bf 2}, 185-201 (2003)



%\bibitem{Husko2013} Husko, C., Combri\'e, S., Colman, P., Zheng, J., Rossi, A.D., Wong, C.W.: Soliton dynamics in the multiphoton plasma regime. Sci. Rep. {\bf 3}, 1100 (2013)

\bibitem{Yang2010} Yang, J.: Nonlinear Waves in Integrable and Nonintegrable Systems. SIAM, Philadelphia (2010)

\bibitem{Yang2008} Yang, J., Lakoba, T.I.: Accelerated imaginary-time evolution methods for the computation of solitary waves. Stud. Appl. Math. {\bf 120}, 265-292 (2008)


\end{thebibliography}
\end{document}